\theoremstyle{plain}
\newtheorem{thm}{Theorem}[subsection]
\newtheorem{lem}[thm]{Lemma}
\newtheorem{cor}[thm]{Corollary}
\newtheorem{prop}[thm]{Proposition}
\theoremstyle{remark}
\newtheorem{rem}[thm]{Remark}
\newtheorem{note}[thm]{Note}
\theoremstyle{definition}
\newtheorem{defn}[thm]{Definition}
\newtheorem{exmp}[thm]{Example}
\begin{document}
\pagenumbering{arabic}

\title{Model theoretic construction for layered semifields}
\author{Tal Perri}%
\maketitle

\makeatletter
\let\thetitle\@title
\let\theauthor\@author
\makeatother

\begin{abstract}
In this paper we introduce a model theoretic construction for the theories of uniform layered domains and semifields
introduced in the paper of Izhakian, Knebusch and Rowen (\cite{Layered}).
We prove that, for a given layering semiring $L$, the theory of uniform $L$-layered divisibly closed semifields is complete. In the process of doing so, we prove that this theory has quantifier elimination and consequently is model complete.
Model completeness of uniform $L$-layered divisibly closed has some important consequences regarding the uniform $L$-layered semifields theory. One example involves equating polynomials. Namely, model completeness insures us that if two polynomials are equal over a divisibly closed uniform $L$-layered semifield, then they are equal over any divisibly closed uniform $L$-layered extension of that semifield, and thus over any uniform $L$-layered domain extending the semifield (as it is contained in its divisible closure of its semifield of fractions).
At the end of this paper we apply our results to the theory of max-plus algebras as a special case of uniform $L$ -layered domains.
\end{abstract}

\section{Overview}

In this paper, we introduce a model theoretic construction for the theories of \linebreak uniform layered domains and semifields and prove that for a given layering semifield $L$, the theory of uniform $L$-layered divisibly closed semifields is complete.
In the first section we introduce the model theoretic construction. This will be done by means of projections onto the set of 'values' and the set of 'layers' of the uniform layered domain algebraic structure. For a given algebraic structure, there might exist several constructions yielding the exact same theory. From a model theoretic perspective,
one aims for a construction that provides a quick tool  to work with. For instance, the more relations and existence axioms introduced in a construction, the more complicated is the task of deriving results about the theory. The importance of the construction we introduce is that it is fairly simple to work with.
In the subsequent sections we use our construction to prove that for given layering semiring $L$, the theory of divisibly closed uniform $L$-layered semifields to be complete. In particular, this provides a quick, direct proof of \cite{Layered}. We start by subsequently characterizing the building blocks of the theory: terms, atomic formulas and general formulas (section 2), and then derive our desired result (section 3).

\bigbreak

Before we start, we give some preliminaries:\\

\begin{note}
\textbf{1.} \ In this paper we make extensive use of the ideas developed in Layered Tropical Mathematics paper (\cite{Layered}).\\
\textbf{2.} \ The book of David Marker \cite{Mark} is used as a standard reference for all model theory
definitions and ideas appearing in this paper .\\
\textbf{3.} \ There is a similarity of notation in super-tropical theory and model theory which
may cause some confusion. Namely, while in model theory, for a structure $A$ and a theory $T$,
$A \models T$ denotes the relation $A$  \ `models' $T$, it also denotes the `surpasses' relation
in the super-tropical theory. To avoid confusion, we note that in this paper, $\models_L$ denotes the $L$-surpasses
relation of the theory of layered domains. We use the notation $\models$ solely to denote
the model theoretic `models' relation.

\end{note}

\begin{rem}
We use the symbol $R^{\dag}$ to indicate a semiring without $0$.
We use the notion of `cancellative' for a (multiplicative) semigroup $G$, to indicate that
for any $a,b,c \in G$ such that $c \neq 0$, if $ac = bc$ then $a = b$.
We use the notion of `divisibly closed' for a layered structure in the sense of $1$-divisibly closed layered structure
as defined in \cite{Layered}.
\end{rem}

\begin{rem}\label{generic}
The generic uniform $L$-layered domain is an algebraic structure defined as
$\mathcal{R}(L, \mathcal{G}) = \{ ^{[l]}a \ | \ a \in \mathcal{G}, \ l \in L \}$
where $L$ is a cancellative semiring$^{\dag}$ and $\mathcal{G}$ is a cancellative ordered monoid viewed as
a semiring$^{\dag}$ in which addition is given by $a+b = \max \{a,b \}$ with respect to the order
of $\mathcal{G}$. If $\mathcal{G}$ is taken to be a semifield, $\mathcal{R}(L, \mathcal{G})$ is said to be a
uniform $L$-layered $1$-semifield.\\
These structures are defined in detail in the paper \cite{Layered}.
\end{rem}
%

\begin{note}
We use the term \emph{domain} for a  cancellative ordered monoid $M$ viewed as
a semiring, in which addition is given by $a+b = \max \{a,b \}$ with respect to the \linebreak order
of $M$.
\end{note}
%

\newpage

\section{Model theory of uniform layered domains} \ \\

We start by constructing a theory for uniform layered domains and divisibly closed uniform $1$-semifields,
as first introduced in \cite{Layered}.\\

\begin{note}
Throughout the rest of this paper, as we deal only with uniform layered domains, we usually omit the use of the word `uniform'.
\end{note}

The main idea of our construction is to view a layered domain as a set theoretic cartesian product of two sets.
The first is the `fiber' of $\mathcal{G}$ values obtained by the \linebreak restriction to the unit layer $1_{L}$ of $L$, while the second is the
layers `fiber' obtained by the restriction to the unit $1_{G}$ of $\mathcal{G}$. The operations of the layered domain
induce (essentially different) operations on these subsets to form a pair of semiring structures. The nice things about these resulting
semirings is that they can be used to reconstruct the original uniform domain. We use this property by defining a pair of projections,
$\pi_1$ and $\pi_2$, one for each of these special semirings. Through these projections, we introduce the axioms defining the uniform layered
domain, by defining them on these special sets. The main advantage in this construction is that it keeps most of the axioms
in universal form which in turn, are much easier to work with in model theory.

\medbreak
Before we start our construction and throughout the rest of this paper, we will \linebreak introduce relevant model theoretic concepts before using them.\\
\bigbreak
\begin{defn}
A \emph{language} $\mathcal{L}$ is given by specifying the following data:
\begin{enumerate}
  \item A set of function symbols $\mathcal{F}$ and positive integers $n_{f}$ for each $f \in \mathcal{F}$.
  \item A set of relation symbols $\mathcal{R}$ and positive integers $n_{R}$ for each $R \in \mathcal{R}$.
  \item A set of constant symbols $\mathcal{C}$.
\end{enumerate}
\end{defn}

\begin{defn}
Let $\mathcal{L}$ be a language. An $\mathcal{L}$-\emph{theory} $T$ is a set of $\mathcal{L}$-sentences.
\end{defn}

\begin{defn} \label{theory_defn}
Let $\mathcal{L}^{\star} = \{+ , \cdot   ,  \pi_{1}, \pi_{2} , < , 1, 0  \}$
where $\cdot$ and $+$  are binary function symbols, $\pi_{1}, \pi_{2}$ are unary functions symbols and
$1, 0$  are constant symbols.

We define the $\mathcal{L}^{\star}$-theory $\mathrm{T}$ satisfying the following axioms:

$$\forall x \ 1 \cdot x = x \cdot 1 = x$$
$$\forall x \forall y \  x \cdot y = y \cdot x$$
$$\forall x \forall y \forall z \  x \cdot ( y \cdot z ) = ( x \cdot y ) \cdot z$$
$$\forall x  (x \neq 0 \rightarrow \ \exists y  \ y \cdot \pi_1(x) = 1)$$
$$\forall x \forall y ((x = 0) \vee (y = 0) \leftrightarrow x \cdot y = 0)$$

$$\forall x \  0 + x = x + 0 = x$$
$$\forall x \ 0 \cdot x = x \cdot 0 = 0$$

$$\forall x \forall y  \  \pi_{1}(x) + \pi_{1}(y) = \pi_{1}(y) + \pi_{1}(x)$$
$$\forall x \forall y \forall z  \ \pi_{1}(x) + ( \pi_{1}(y) + \pi_{1}(z) ) = ( \pi_{1}(x) + \pi_{1}(y) ) + \pi_{1}(z)$$

$$\forall x \forall y  \  \pi_{2}(x) + \pi_{2}(y) = \pi_{2}(y) + \pi_{2}(x)$$
$$\forall x \forall y \forall z  \ \pi_{2}(x) + ( \pi_{2}(y) + \pi_{2}(z) ) = ( \pi_{2}(x) + \pi_{2}(y) ) + \pi_{2}(z)$$

$$\pi_{1}(1) = \pi_{2}(1) = 1$$
$$\pi_{1}(0) = 0$$
$$\pi_{2}(0) = 1$$

\begin{equation}\label{nontriviality}
\exists x \ ((\pi_1(x) \neq 0) \wedge (\pi_1(x) \neq 1))
\end{equation}



$$\forall x \forall y \ (x < y ) \leftrightarrow (\pi_{1}(x) < \pi_{1}(y))$$
$$\forall x \forall y \forall z \ (\pi_{1}(x) < \pi_{1}(y))  \wedge (\pi_{1}(y) < \pi_{1}(z))  \rightarrow \pi_{1}(x)< \pi_{1}(z)  $$
$$\forall x \forall y \ ( \pi_{1}(x) < \pi_{1}(y)  \vee \pi_{1}(x) = \pi_{1}(y)  \vee \pi_{1}(y) < \pi_{1}(x))$$

$$\forall x \ \neg(\pi_{2}(x) <  \pi_{2}(x))$$
$$\forall x \forall y \forall z \ (\pi_{2}(x) < \pi_{2}(y))  \wedge (\pi_{2}(y) < \pi_{2}(z))  \rightarrow \pi_{2}(x)< \pi_{2}(z)  $$
$$\forall x \forall y \ ( \pi_{2}(x) < \pi_{2}(y)  \vee \pi_{2}(x) = \pi_{2}(y)  \vee \pi_{2}(y) < \pi_{2}(x))$$


\begin{align}\label{sentence}
\forall x \forall y \ \pi_{1}(x \cdot y) = \pi_{1}(x) \cdot \pi_{1}(y)\end{align}
\begin{align*}\forall x \forall y \ \pi_{2}(x \cdot y) = \pi_{2}(x) \cdot \pi_{2}(y)\end{align*}

\bigbreak

\begin{align*}\forall x \forall y \ ((\pi_{1}(x) \geq \pi_{1}(y))\wedge(\pi_{1}(x + y) = \pi_{1}(x))) \vee
((\pi_{1}(x) \leq \pi_{1}(y))\wedge(\pi_{1}(x + y) = \pi_{1}(y)))\end{align*}
\begin{align*}\forall x \forall y \ & ((\pi_{1}(x) > \pi_{1}(y))\wedge(\pi_{2}(x + y) = \pi_{2}(x))) \vee
((\pi_{1}(x) < \pi_{1}(y))\wedge(\pi_{2}(x + y) = \pi_{2}(y)))\\
\ \vee & ((\pi_{1}(x) = \pi_{1}(y))\wedge(\pi_{2}(x + y) = \pi_{2}(x)+\pi_{2}(y))) \end{align*}

$$ \forall x \ \pi_{1}(\pi_{1}(x)) = \pi_{1}(x)$$
$$ \forall x \ \pi_{2}(\pi_{2}(x)) = \pi_{2}(x)$$
$$ \forall x \ \pi_{1}(\pi_{2}(x)) = \pi_{2}(\pi_{1}(x)) = 1$$

\begin{equation}\label{equalityAxiom}
\forall x \forall y \ ( x = y  \leftrightarrow (\pi_{1}(x)=\pi_{1}(y)) \wedge (\pi_{2}(x)=\pi_{2}(y))
\end{equation}

\begin{equation}\label{dclosedAxiom}
\text{For each} \  n \in \mathbb{N} : \forall x \ x = \pi_{1}(x) \rightarrow \exists y \ ((y = \pi_{1}(y)) \wedge (y^n = x))
\end{equation}
\end{defn}

\bigbreak
\bigbreak

\begin{note}
The use of the unary function symbols allows us to model the different \linebreak behavior of the value monoid
and layering (cancellative) semiring$^{\dag}$ with respect to the binary functions $\cdot$ and $+$ while keeping the formulation of the theory free from existence axioms.
\end{note}

\begin{defn}
$\mathrm{T}$ just given is called the theory of $\mathbb{N}$-divisibly closed $1$-semifields $DLSF$.
\end{defn}

\begin{flushleft}In what follows we refer to $\mathbb{N}$-divisibly closed simply as divisibly closed.\end{flushleft}
\bigbreak

\begin{rem}
A few remarks concerning the DLSF theory:\\
\begin{enumerate}
  \item Axiom \eqref{nontriviality} ensures that the `value' semiring (corresponding to $\mathcal{G}$ of Remark~\eqref{generic}) is non-trivial.

  \item One consequence of the above theory is that
$$\forall x \ (x=\pi_{1}(x) \cdot \pi_{2}(x)).$$
Indeed, $\pi_{1}(\pi_{1}(x) \cdot \pi_{2}(x))=\pi_{1}(x) \cdot 1 = \pi_{1}(x)$ and
$\pi_{2}(\pi_{1}(x) \cdot \pi_{2}(x))= 1 \cdot \pi_{2}(x) = \pi_{2}(x)$. Thus by axiom \eqref{equalityAxiom} we have that $x=\pi_{1}(x) \cdot \pi_{2}(x)$.

   \item It can be checked that all order relations are consequences of the above theory. Namely,
$$\forall x \ \neg(x < x)$$
$$\forall x \forall y \forall z \ ((x < y  \wedge y < z)  \rightarrow x< z ) $$
$$\forall x \forall y \ ( x < y  \vee x = y  \vee y < x)$$

  \item We have introduced a theory for uniform $L$-layered $1$-semifields $\mathcal{R}(L, \mathcal{G})$
where $(L,\cdot)$ need not be a semifield. In case we desire $L$ to be a semifield, and thus $\mathcal{R}(L, \mathcal{G})$ will also be a semifield,
we need to replace the axiom $$\forall x  (x \neq 0 \rightarrow \ \exists y  \ y \cdot \pi_1(x) = 1)$$
by the axiom $$\forall x (x \neq 0 \rightarrow \ \exists y  \ y \cdot x = 1).$$

  \item The last axioms \eqref{dclosedAxiom} ensure that  $\mathcal{G}$ is $\mathbb{N}$ - divisibly closed.
Notice that all axioms but these last ones along with the existence of an inverse to
$\pi_1(x)$ and the non-triviality of the image of $\pi_1$, are universal (i.e., `for all' sentences).
\end{enumerate}

\end{rem}

\bigbreak

\begin{defn}
Let $\mathcal{F}, \mathcal{R}, \mathcal{C}$ be the sets of functions, relations and constants of a language $\mathcal{L}$.
An $\mathcal{L}$-\emph{structure} $\mathcal{M}$ is given by the following data:
\begin{enumerate}
  \item A nonempty set $M$ called the \emph{universe} of $\mathcal{M}$.
  \item A function $f^{\mathcal{M}}: M^{n(f)} \rightarrow M$ for each $f \in \mathcal{F}$.
  \item A set $R^{\mathcal{M}} \subset M^{n(R)}$ for each $R \in \mathcal{R}$.
  \item An element $c^{\mathcal{M}} \in M$ for each $c \in \mathcal{C}$.
\end{enumerate}
\end{defn}

\begin{defn}
Let $\mathcal{M}$ and $\mathcal{N}$ be $\mathcal{L}$-structures with universes $M$ and $N$, respectively. An $\mathcal{L}$-\emph{embedding}, $\eta : \mathcal{M} \rightarrow \mathcal{N}$ is a one to one map $\eta : M \rightarrow N$ that preserves the interpretation of all symbols of $\mathcal{L}$. In the context of model theoretic assertions, when $\mathcal{L}$ is understood, we simply write `embedding'.
\end{defn}

\begin{defn}
Let $\mathcal{M}$ be an $\mathcal{L}$-structure, for some language $\mathcal{L}$. Then $Th(\mathcal{M})$, the \emph{full theory of} $\mathcal{M}$, is defined to be the set of all $\mathcal{L}$-sentences $\phi$ such that $\mathcal{M} \models \phi$ (i.e., all $\mathcal{L}$-sentences satisfied by $\mathcal{M}$).
\end{defn}
\begin{rem}
$Th(\mathcal{M})$ is a complete theory, i.e., for every $\mathcal{L}$-sentence $\phi$, \ either \linebreak $Th(\mathcal{M}) \models \phi$ or $Th(\mathcal{M}) \models \neg \phi$.
\end{rem}

\begin{defn}
We denote the theory of semirings by $\mathcal{L}_{sr}$, and the theory of (linearly) ordered semirings
by $\mathcal{L}_{osr}$. Both of these theories consist of universal ($\forall$) axioms.
\end{defn}

\bigbreak

We now formally define the theory of $\mathbb{N}$-divisibly closed $L$-layered $1$-semifields $DLSF(L)$ to be the $DLSF$ endowed with the extra condition that $Im(\pi_2) \subseteq L$ as ordered semirings.

\begin{defn}
Let $L$ be a given ordered semiring$^{\dag}$. Let $\mathcal{L}_{L}^{\star}$ be the language $\mathcal{L}^{\star}$ where we add to $\mathcal{L}^{\star}$ the set of constant symbols $L^{\prime} = \{ \ell^{\prime} \ : \ \ell \in L \}$ , and a single predicate (unary relation symbol) $P_{L}$ identifying the elements of $L^{\prime}$ . Define the following set of $\mathcal{L}^{\star}_{L}$ formulas:
\begin{align}\label{restriction_axioms1}
\Phi(L) & = \{ \phi(\ell_1^{\prime},...,\ell_n^{\prime}) \ : \ \phi \in Th(L) \}\\
&  = \{ \phi(\ell_1^{\prime},...,\ell_n^{\prime}) \ : \ L \models \phi(\ell_1,...,\ell_n), \ \phi \ \text{is an} \ \mathcal{L}_{osr}-\text{formula} \}.  \nonumber
\end{align}
\begin{equation}\label{restriction_axioms2}
\text{For each} \ \ell^{\prime} \in L^{\prime} \ : \ \ell^{\prime} = \pi_2(\ell^{\prime}).
\end{equation}
\begin{equation}\label{restriction_axioms3}
\forall x \ (x = \pi_2(x) \leftrightarrow P_{L}(x)).
\end{equation}
\begin{equation}\label{restriction_axioms4}
\pi_2(\ell^{\prime})=1 \leftrightarrow  \ell^{\prime} = 1.
\end{equation}

\bigbreak

\begin{flushleft} Now, define the $\mathcal{L}^{\star}_{L}$-theory $DLSF(L)$ to be the $DLSF$ theory with the above sentences \eqref{restriction_axioms1} - \eqref{restriction_axioms4} added to it. \end{flushleft}

\end{defn}

\newpage

\begin{rem}
\begin{enumerate}
  \item The symbols of $L^{\prime}$ and the sentences in and \eqref{restriction_axioms2} ensure that a copy of $L$, $L^{\prime}$, is contained in $Im(\pi_2)$. The sentences in \eqref{restriction_axioms3} ensure that this copy is  $Im(\pi_2)$ (as sets).  The formulas in \eqref{restriction_axioms1} endow $L^{\prime}$ with the algebraic structure of $L$. Note that the latter is well-defined, as $Im(\pi_2)$ is itself an ordered semiring$^{\dag}$.
 \item We have added axiom \eqref{restriction_axioms4}, since the symbol $1$ of $\mathcal{L}^{\star}$ and one of the symbols in $L^{\prime}$ must be interpreted by the same element for any model of $DLSF(L)$.
 \item All added sentences are quantifier free $\mathcal{L}^{\star}_{L}$-formulas. In particular the sentences in \eqref{restriction_axioms1} are quantifier and variable free $\mathcal{L}^{\star}_{L}$-formulas, as the $\ell_i^{\prime}$'s are constant symbols.

  \item The sentences in \eqref{restriction_axioms2} imply that $\pi_1(\ell^{\prime}) = 1$ for every $\ell^{\prime} \in L^{\prime}$.
\end{enumerate}
\end{rem}

\bigbreak

\begin{rem}
The $\mathcal{L}^{\star}$-theory of layered semidomains, $LD$, is just the theory of $DLSF$ without
the axioms of invertibility, triviality of $\pi_1$ and $\mathbb{N}$ - divisibility, namely, omitting the axioms:

$$\exists x \ ((\pi_1(x) \neq 0) \wedge (\pi_1(x) \neq 1));$$

\begin{equation*}
\text{for each} \  n \in \mathbb{N} : \forall x \ x = \pi_{1}(x) \rightarrow \exists y \ ((y = \pi_{1}(y)) \wedge (y^n = x))
\end{equation*}

$$\forall x  (x \neq 0 \rightarrow \ \exists y  \ y \cdot \pi_1(x) = 1).$$

\bigbreak

\begin{flushleft}We analogously define $LD(L)$ to be $DLSF(L)$ without the above axioms.\end{flushleft}
\end{rem}

\bigbreak

\medbreak

Uniform layered domains and uniform layered $1$-semifields are $\mathcal{L}^{\star}$-structures.
Let us denote, for simplicity, the general element $^{[l]}a$ where $l$ is an element
of the layering domain$^{\dag}$, $L$, and $a \in \mathcal{G}$, by the pair $(l,a)$.
Indeed, we interpret $0$ by $(1,-\infty)$ and $1$ by $(1,0)$. As for the function symbols, we
interpret $\cdot$ and $+$ by $\cdot_{L}$, $+_{L}$, respectively.
We interpret $\pi_{1}$ by the `evaluation' $\pi_{1}(l,a) = (1_{L},a)$ where $1_{L} \in L$ is the identity
element with respect to $\cdot_{L}$ and $\pi_{2}$ by the layering map $\pi_{2}(l,a) = (l,1_{G})$
where $1_{G} \in G$ is the identity element with respect to $\cdot_{G}$ (plus).
Finally, we interpret $<$ by $<_{L}$. \\

Analogously, given an ordered domain$^{\dag}$ , uniform $L$-layered domains and uniform \linebreak $L$-layered $1$-semifields are $\mathcal{L}^{\star}_{L}$-structures, where the elements $(\ell,1)$ \ with $\ell \in L$, interpret the constant symbols $\ell^{\prime} \in L^{\prime}$ and the set $\{ (\ell,1)  \ : \ \ell \in L \}$ interprets the unary relation symbol $P_{L}$.

As an example we note that the ghost surpasses relation introduced in \cite{Layered} can be defined as follows:

\begin{exmp}
For a uniform $L$-layered domain $M$, by definition we have the following:
for $a,b \in M$,
$$a \cong_{\nu} b \Leftrightarrow \pi_{1}(a) = \pi_{1}(b)$$ and
$$a \models_{L} b \Leftrightarrow (a=b) \vee ((\pi_{2}(a) > \pi_{2}(b))\wedge
(\pi_{1}(a) = \pi_{1}(b))) \vee ((a = b + c)\wedge (\pi_{2}(c) \geq \pi_{2}(b))).$$
\end{exmp}

\medbreak

\begin{defn}
Given an $\mathcal{L}$-theory, $\mathbb{T}$, an $\mathcal{L}$-structure $\mathcal{M}$ is a \emph{model} for
$\mathbb{T}$, written as $\mathcal{M} \models \mathbb{T}$, if $\mathcal{M} \models \phi$ ($\mathcal{M}$ satisfies  $\phi$) for all sentences(axioms) $\phi \in \mathbb{T}$.
\end{defn}

By the specifications of the axioms of layered $1$-semifield and $L$-layered $1$-semifield, it is a straightforward consequence that the $\mathcal{L}^{\star}$-structure of a layered domain and $\mathcal{L}^{\star}_{L}$-structure of an $L$-layered domain are models for $DL$ and $DL(L)$, respectively, while the $\mathcal{L}^{\star}$-structure
of a layered $1$-semifield and the $\mathcal{L}^{\star}_{L}$-structure of an $L$-layered $1$-semifield are models of $DLSF$ and $DLSF(L)$, as described in \cite{Layered}.

\newpage

\section{Building up the theories of layered and $L$-layered \\ \ \ \ \ divisibly closed $1$-semifields}\ \\

\bigbreak

As mentioned in the overview, we characterize, step by step, the building blocks of the theory: terms, atomic formulas and general formulas.\\

\begin{flushleft}We first describe the terms of the languages $DL$ and $DL(L)$ .    \end{flushleft}

\begin{defn}
The set of $\mathcal{L}$-\emph{terms} is the smallest set $\mathcal{T}$ such that
\begin{enumerate}
  \item $c \in \mathcal{T}$ for each constant symbol $c \in \mathcal{C}$.
  \item Each variable symbol $v_i \in \mathcal{T}$ for $i=1,2,...$ \ .
  \item If $t_1,...,t_{n(f)} \in \mathcal{T}$ and $f \in \mathcal{F}$, then $f(t_1,...,t_{n(f)}) \in \mathcal{T}$.
\end{enumerate}
\end{defn}

The language $\mathcal{L}^{\star}$ contains the binary function symbols $\cdot$ and $+$,
taking $x_{1},...,x_{n}$ to be variable symbols. Denote $x^{k} = x \cdot \dots \cdot x$ taken $k \in \mathbb{N}$ times and $sx = x  + \ \cdots \ + x$ taken $s \in \mathbb{N}$ times.
Now, since $\pi_{1} \circ \pi_{1} = \pi_{1}$, $\pi_{2} \circ \pi_{2} =\pi_{2}$ and $\pi_{1} \circ \pi_{2} = \pi_{2} \circ \pi_{1} = 1$,
a general $\mathcal{L}^{\star}$-term can be formally written as
$$t(x_1,...,x_n,\pi_{1}(x_1),...,\pi_{1}(x_n),\pi_{2}(x_1),...,\pi_{2}(x_n))$$
$$ = p(x_1,...,x_n,\pi_{1}(x_1),...,\pi_{1}(x_n),\pi_{2}(x_1),...,\pi_{2}(x_n))$$
where $p \in \mathbb{N}[x_1,...,x_n,\pi_{1}(x_1),...,\pi_{1}(x_n),\pi_{2}(x_1),...,\pi_{2}(x_n)]$ is a polynomial.\\

The language $\mathcal{L}^{\star}_{L}$ contains the additional constant symbols $\{ \ell^{\prime} \ : \ \ell^{\prime} \in L^{\prime} \}$, which implies that  a general $\mathcal{L}^{\star}_{L}$-term can be written as
$$t(x_1,...,x_n,\pi_{1}(x_1),...,\pi_{1}(x_n),\pi_{2}(x_1),...,\pi_{2}(x_n), \ell^{\prime}_1,...,\ell^{\prime}_k)$$
$$ = p(x_1,...,x_n,\pi_{1}(x_1),...,\pi_{1}(x_n),\pi_{2}(x_1),...,\pi_{2}(x_n))$$
where $p \in (\mathbb{N} \cdot L^{\prime})[x_1,...,x_n,\pi_{1}(x_1),...,\pi_{1}(x_n),\pi_{2}(x_1),...,\pi_{2}(x_n)]$ is a polynomial. Here $\mathbb{N} \cdot L^{\prime} \doteq \{ k \ell^{\prime} \ : \ k \in \mathbb{N}, \ \ell^{\prime} \in L^{\prime} \}$. This merely translates to assigning $L$-layers to the coefficients of the polynomial.  Note that endowed with the algebraic structure of $L$ (by the sentences in \eqref{restriction_axioms1}), $L^{\prime}$ is closed under $+$ and $\cdot$.

Note that $\mathbb{N}$ is used in a formal manner, before interpretation of the language takes place.

\bigbreak

We proceed to describing atomic $\mathcal{L}^{\star}$ - formulas.
\begin{defn}
$\phi$ is an atomic $\mathcal{L}$-formula if either
\begin{enumerate}
  \item $t_1=t_2$ where $t_1$ and $t_2$ are terms.
  \item $R(t_1,...,t_{n(R)})$ where $R$ is $n(R)$-ary  relation of $\mathcal{L}$ and $t_1,...,t_{n(R)}$ are terms.
\end{enumerate}
\end{defn}

In the language $\mathcal{L}^{\star}$, there is only one relation symbol (besides $=$), namely, $<$.
Thus by the definition the atomic formulas in our language are
\begin{enumerate}
  \item $t_1=t_2$
  \item $t_1<t_2$
\end{enumerate}
where $t_1$ and $t_2$ are terms.\\

Although the language $\mathcal{L}^{\star}_{L}$ contains the additional unary relation symbol $P_{L}$,
since $DL(L) \models (x = \pi_2(x)) \leftrightarrow P_{L}(x)$, the atomic formulas of the form $P(t)$ are equivalent to the atomic formulas $t=\pi_2(t)$ and can be omitted from our discussion.

\begin{defn}
The set of $\mathcal{L}$-formulas is the smallest set $\Omega$ containing the atomic formulas
such that
\begin{enumerate}
  \item if $\phi \in \Omega$, then $\neg \phi \in \Omega$;
  \item if $\phi, \psi \in \Omega$, then $\phi \wedge \psi , \ \phi \vee \psi \in \Omega$;
  \item if $\phi \in \Omega$, then $\exists v_i \phi$ and $\forall v_i \phi$ are in $\Omega$, where $v_i$ is a subset of the variables in $\phi$.
\end{enumerate}

\end{defn}

\begin{flushleft} By all theories defined above, we have that \end{flushleft}
$$\forall x \forall y \ (x < y ) \leftrightarrow (\pi_{1}(x) < \pi_{1}(y))$$ and
$$\forall x \forall y \ ( x = y  \leftrightarrow (\pi_{1}(x)=\pi_{1}(y)) \wedge (\pi_{2}(x)=\pi_{2}(y)).$$
We can replace the atomic formulas $t_1=t_2$ by $(\pi_{1}(t_1) = \pi_{1}(t_2)) \wedge (\pi_{2}(t_1) =\pi_{2}(t_2))$
and $t_1 < t_2$ by $(\pi_{1}(t_1) < \pi_{1}(t_2))$.\\

\begin{rem}
Although $\pi_1(x)$ and $\pi_2(x)$ are restrictions of the variable $x$ to a specified subset of elements in the universe, there is no  loss of generality referring to them as general variables. Thus, for simplicity of notation, we omit specifying these restrictions in the current discussion.
\end{rem}
\bigbreak

As we have shown, a term $t$ of $\mathcal{L}^{\star}$ is just a polynomial in $\mathbb{N}[x_1,...,x_n]$ or a polynomial in $(\mathbb{N} \cdot L^{\prime})[x_1,...,x_n]$ for $\mathcal{L}^{\star}_{L}$, where $x_1,...,x_n$ are the set
of variables occurring in $t$. In the following discussion and throughout the rest of this section, there is no need to discriminate between these cases, as the assertions made apply to both of them. The only modification needed to adjust the statements to $\mathcal{L}^{\star}_{L}$ (instead of $\mathcal{L}^{\star}$), are declarative.

\bigbreak

\begin{defn}
Let $t$ be a term. Write $t(x_1,...,x_n) = \sum_{i=0}^{m} t_{i}(x_1,...,x_n)$ where $t_{i}$ are monomial terms for $i = 0,...,m$.
We define a \emph{monomial-terms-ordering (MTO)}, \newline $O(t_0,...,t_m)$, of $t$ to be a partition of $\{0,...,m \}$ into
two distinct sets $I,J$ such that for any $i,j \in I$, $\pi_1(t_i) = \pi_1(t_j)$, and for any
$k \in J$, $\pi_1(t_k) < \pi_1(t_j)$ for any $j \in I$. We also denote an MTO of $t$ by $O(t)$.
\end{defn}

\begin{note}
MTO is defined to distinguish the dominant (essential) monomials from all other non-essential monomials comprising a term.
\end{note}

\bigbreak

Let $t= \sum_{i=0}^{m} t_{i}$ be a term. Given an MTO ordering $O(t_0,...,t_m)$ of $t$,
we can use the sentences in \eqref{sentence} (all four axioms in the paragraph), along with
$\pi_{1} \circ \pi_{1} = \pi_{1}$, $\pi_{2} \circ \pi_{2} =\pi_{2}$ and
$\pi_{1} \circ \pi_{2} = \pi_{2} \circ \pi_{1} = 1$, to rewrite $t$ in
equivalent form as follows:
\begin{align*}
t(x_1,...,x_n) = & \pi_1(t(x_1,...,x_n))\pi_2(t(x_1,...,x_n)) \\
& = t'_{1}(\pi_{1}(x_1),...,\pi_{1}(x_n))t'_{2}(\pi_{2}(x_1),...,\pi_{2}(x_n))\end{align*}
for appropriate terms $t'_{1}$ and $t'_{2}$ satisfying $\pi_{i}(t(x_1,...,x_n)) = t'_{i}(\pi_{i}(x_1),...,\pi_{i}(x_n))$.

\bigbreak

Explicitly, consider the expression
$$\bigvee_{O(t_0,...,t_m)}\bigg(\bigwedge_{\substack{ i,j \in I \\ i > j }}\big(\pi_1(t_i) = \pi_1(t_j)\big) \wedge \bigwedge_{\substack{i \in I \\ j \in J} }\big(\pi_1(t_i) > \pi_1(t_j)\big)\bigg)$$
where the disjunction is taken over all possible MTO of $t$ and
$I,J \subseteq \{0,...,m \}$ is the partition defined by $O(t_0,...,t_m)$.
This expression is a tautology, as for any evaluation of the variables in $t$, at least one of the terms in the
disjunct must hold.
For each disjunct, by the considerations given above, we have that

$$\bigwedge_{\substack{ i,j \in I \\ i > j }}(\pi_1(t_i) = \pi_1(t_j))
\wedge \bigwedge_{\substack{ i \in I \\ j \in J } }(\pi_1(t_i) > \pi_1(t_j))
\rightarrow t = ( \pi_{1}(t_i) (\sum_{j \in J} \pi_{2}(t_j)))$$

\bigbreak

\begin{rem}
Note that if there exists a monomial term $t_{i_0}$ such that
$$\bigwedge_{\substack{ i,j \in I \\ i > j }}(\pi_1(t_i) = \pi_1(t_j))
\wedge \bigwedge_{\substack{ i \in I \\ j \in J } }(\pi_1(t_i) > \pi_1(t_j))$$
is false for every evaluation for any ordering of $t$ such that $t_{i_0} \in I$, then $t_{i_0}$ will not affect
the set of equivalent expressions for $t$, and thus can be omitted from $t$. After all
such terms are omitted, $t$ is reduced to what is known as its \emph{essential form}.
\end{rem}

In what follows, for simplicity of notation, we denote
$$\Delta(t_{0},...,t_{m}) \doteq \bigwedge_{\substack{ i,j \in I \\ i > j }}(\pi_1(t_i) = \pi_1(t_j))
\wedge \bigwedge_{\substack{ i \in I \\ j \in J } }(\pi_1(t_i) > \pi_1(t_j))$$
for the ordering $O(t_{1,0},...,t_{1,m_1}$ corresponding to partition $(I,J)$ of $\{0,...,m \}$.

Write $t_1(x_1,...,x_n) = \sum_{i=0}^{m_1} t_{1,i}(x_1,...,x_n)$ and $t_2(x_1,...,x_n) = \sum_{j=0}^{m_2} t_{1,j}(x_1,...,x_n)$
where $t_{1,i},t_{2,j}$ are monomial terms for $i = 0,...,m_1$ and $j = 0,...,m_2$.
By the above observations we have that

\medbreak

\small
\begin{align*} t_1 = t_2 & & & \leftrightarrow \bigwedge_{O(t_1),O(t_2)} \bigg(\Big(\Delta(t_{1,0},...,t_{1,m_1}) \wedge \Delta(t_{2,0},...,t_{2,m_2})\Big)
\rightarrow & & & & & & & & & \end{align*}
\begin{align*} & & & &\Big(\pi_{1}(t_{1,i_1}) \Big(\sum_{j \in I_1} \pi_{2}(t_{1,j})\Big)= \pi_{1}(t_{2,i_2}) \Big(\sum_{j \in I_2} \pi_{2}(t_{2,j})\Big)\Big)\bigg)  \end{align*}
\begin{align*} & & & & & & \leftrightarrow \bigwedge_{O(t_1),O(t_2)} \bigg(\Big(\Delta(t_{1,0},...,t_{1,m_1}) \wedge \Delta(t_{2,0},...,t_{2,m_2})\Big) \rightarrow  & & & & & & & & & \end{align*}
\begin{align*} & & & & \big(\pi_{1}(t_{1,i_1}) = \pi_{1}(t_{2,i_2})\big) \wedge \Big(\Big(\sum_{j \in I_1} \pi_{2}(t_{1,j})\Big)= \Big(\sum_{j \in I_2} \pi_{2}(t_{2,j})\Big)\Big)\bigg)\end{align*}

\begin{flushleft}and \end{flushleft}

\small
\begin{align*} t_1 < t_2  & & & \leftrightarrow \bigwedge_{O(t_1),O(t_2)} \bigg(\Big(\Delta(t_{1,0},...,t_{1,m_1}) \wedge \Delta(t_{2,0},...,t_{2,m_2})\Big)
\rightarrow & & & & & & & & & \end{align*}
\begin{align*} & & & & \Big(\pi_{1}(t_{1,i_1}) \Big(\sum_{j \in I_1} \pi_{2}(t_{1,j})\Big) <  \pi_{1}(t_{2,i_2}) \Big(\sum_{j \in I_2} \pi_{2}(t_{2,j})\Big)\Big)\bigg)  \end{align*}
\begin{align*} & & & & & & \leftrightarrow \bigwedge_{O(t_1),O(t_2)} \bigg(\Big(\Delta(t_{1,0},...,t_{1,m_1}) \wedge \Delta(t_{2,0},...,t_{2,m_2})\Big) \rightarrow \Big(\pi_{1}(t_{1,i_1}) < \pi_{1}(t_{2,i_2})\Big)\bigg).\end{align*}

Here $\bigwedge_{O(t_1),O(t_2)}$ runs over all possible distinct ordering of $t_1$ and $t_2$, where $I_1,I_2$ are \linebreak determined by the orderings and $i_1 \in I_1, i_2 \in I_2$ can be taken to be an element from each of the sets.

\medbreak

\begin{rem}\label{rem1}
By the theory of first order logic, the above expression can be expressed in \linebreak disjunctive normal form. In particular,
the logical expression $A \rightarrow B$ is equivalent to $\neg A \vee B$.
\end{rem}

\bigbreak

Using this last observation, we now characterize a general quantifier free formula in the language $\mathcal{L}^{\star}$.
Let $\psi$ be a quantifier free, $\mathcal{L}^{\star}$-formula. Then $\psi$ can be written in disjunctive normal form.
Namely, using the above assertions, there are atomic or negated (expressed by the $<$ relation) atomic formulas $\theta_{i,j}(\bar(x))$
of the following forms:
\medbreak

$$1. \ p(\pi_{1}(x_1),...,\pi_{1}(x_n)) = q(\pi_{1}(x_1),...,\pi_{1}(x_n))$$
$$2. \ p(\pi_{1}(x_1),...,\pi_{1}(x_n)) < q(\pi_{1}(x_1),...,\pi_{1}(x_n))$$

 \medbreak

\begin{flushleft} where $p,q$ are monomials. Here we note that the relations $\neq, \geq, \leq$ can all be expressed in the form of conjunctions, and disjunctions of the relations $=$ and $<$.\end{flushleft}
By the above assertions we have that
$$p(\pi_{1}(x_1),...,\pi_{1}(x_n))=\pi_1(p(x_1,...,x_n))$$ and  $$q(\pi_{1}(x_1),...,\pi_{1}(x_n))=\pi_1(q(x_1,...,x_n)),$$
thus these expressions can be inverted to obtain the monomial equations

$$ Q_1(\bar{x}) \doteq p(\pi_{1}(x_1),...,\pi_{1}(x_n)) \cdot q(\pi_{1}(x_1),...,\pi_{1}(x_n))^{-1} = 1$$
$$ P_1(\bar{x}) \doteq p(\pi_{1}(x_1),...,\pi_{1}(x_n)) \cdot q(\pi_{1}(x_1),...,\pi_{1}(x_n))^{-1} < 1$$

\begin{flushleft} and of the form \end{flushleft}
$$3. \ r(\pi_{2}(x_1),...,\pi_{2}(x_n)) = s(\pi_{2}(x_1),...,\pi_{2}(x_n))$$
$$4. \ r(\pi_{2}(x_1),...,\pi_{2}(x_n)) \neq s(\pi_{2}(x_1),...,\pi_{2}(x_n))$$

\begin{flushleft}where $r,s \in \mathbb{N}[\lambda_1,...,\lambda_n]$ are polynomials over $\mathbb{N}$ (we use $\lambda_i$ here to denote variables in order to avoid confusion with the $x_i$'s).\end{flushleft}
\begin{flushleft}Denote \end{flushleft}
$$ Q_2(\bar{x}) \doteq  r(\pi_{2}(x_1),...,\pi_{2}(x_n)) = s(\pi_{2}(x_1),...,\pi_{2}(x_n))$$
$$ P_2(\bar{x}) \doteq  r(\pi_{2}(x_1),...,\pi_{2}(x_n)) \neq s(\pi_{2}(x_1),...,\pi_{2}(x_n))$$

\begin{flushleft}so that\end{flushleft}
$$ \mathrm{T} \models \psi(\bar{x}) \leftrightarrow \bigvee_{i=1}^{n} \bigwedge_{j=1}^{m}\theta_{i,j}(\bar{x}).$$
%

\bigbreak

\begin{rem}
Let $G$ be a layered divisibly closed 1-semifield. Then, by definition, we must have $1,0,c \in G$ for some element $c$ such that $\pi_1(c) \not \in \{0,1\}$ which implies that $\pi_1(c) > 1$. Since every element of $x \in G$ is of the form $\pi_1(x) \cdot \pi_2(x)$ and vice versa, it is sufficient to consider $\pi_1(G)$ and $\pi_2(G)$. Now, $\pi_1(c) \in \pi_1(G)$ so $\pi_1(G) \setminus \{ 0 \}$ is not the trivial group. Thus, as $\pi_1(G) \setminus \{ 0 \}$ is divisibly closed we have that $\mathbb{Q}_{\geq 0} \subseteq \pi_1(G)$ is embeddable in $\pi_1(G)$. Finally, as  $\{ 1 \} \subseteq \pi_2(G)$, we have that
\begin{equation} \label{Theta} \Theta = 1 \times \mathbb{Q}_{ \geq 0}\end{equation}
is embeddable in $G$. Moreover, as $\Theta \models DLSF$ we have that for any $\mathcal{L}^{\star}$-structure $\mathcal{M}$ such that $\mathcal{M} \models DLSF$, $\Theta$ embeds into $\mathcal{M}$.\\
Analogously, for a given semiring$^{\dag}$ \ $L$, define
\begin{equation} \label{Theta_L} \Theta_L = L \times \mathbb{Q}_{ \geq 0} .\end{equation}
Then $\Theta_L$ embeds in every $\mathcal{L}^{\star}_{L}$-structure $\mathcal{M}$ such that $\mathcal{M} \models DLSF(L)$.
\end{rem}

\begin{cor}\label{cor1}
$\Theta$ embeds into every model of  \textbf{DLSF} and  for a given semiring$^{\dag}$  $L$, \linebreak $\Theta_L$ embeds into every model of  \textbf{DLSF(L)}.
\end{cor}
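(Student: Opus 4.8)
The plan is simply to observe that Corollary~\ref{cor1} is nothing more than a restatement of the remark immediately preceding it, so the proof consists of making that remark's argument precise and noting that it applies uniformly to every model. First I would fix an arbitrary $\mathcal{L}^{\star}$-structure $\mathcal{M}$ with $\mathcal{M} \models DLSF$ and run the chain of reductions from the preceding remark: by the consequence $\forall x\,(x = \pi_1(x)\cdot\pi_2(x))$ of the theory, every element of $\mathcal{M}$ is determined by its pair of projections, so it suffices to embed $\mathbb{Q}_{\geq 0}$ into $\pi_1(\mathcal{M})$ (the ``value'' part, under $\cdot$ and $<$) and $\{1\}$ into $\pi_2(\mathcal{M})$, the latter being trivial. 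For the value part, axiom~\eqref{nontriviality} supplies an element $c$ with $\pi_1(c)\notin\{0,1\}$; by the order axioms together with $\forall x\,(x\neq 0\rightarrow\exists y\ y\cdot\pi_1(x)=1)$ one gets $\pi_1(c)>1$ (or its inverse is), so the multiplicative group $\pi_1(\mathcal{M})\setminus\{0\}$ is a nontrivial, torsion-free (by cancellativity and the order), divisible (by axiom~\eqref{dclosedAxiom}) abelian group, hence contains an isomorphic copy of $(\mathbb{Q},+)$, i.e.\ of $\mathbb{Q}_{>0}$ multiplicatively; adjoining $0$ and checking the order is preserved on the generated copy yields an embedding of $\Theta = 1\times\mathbb{Q}_{\geq 0}$ as an $\mathcal{L}^{\star}$-structure. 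Since $c$ was furnished by an axiom of the theory, this works in every model, which is the assertion.

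For the $L$-layered case I would repeat the argument verbatim for an arbitrary $\mathcal{M}\models DLSF(L)$: the new axioms \eqref{restriction_axioms1}--\eqref{restriction_axioms4} force a copy $L' = \mathrm{Im}(\pi_2)$ of the ordered semiring$^{\dag}$ $L$ to sit inside $\mathcal{M}$ with $\pi_1(\ell')=1$ for all $\ell'$, while the value part is handled exactly as before. Combining the embedding $L\hookrightarrow\pi_2(\mathcal{M})$ with $\mathbb{Q}_{\geq 0}\hookrightarrow\pi_1(\mathcal{M})$ and using $x=\pi_1(x)\cdot\pi_2(x)$ gives an $\mathcal{L}^{\star}_L$-embedding of $\Theta_L = L\times\mathbb{Q}_{\geq 0}$ into $\mathcal{M}$; one checks that the constant symbols $\ell'$ and the predicate $P_L$ are interpreted compatibly, which is immediate from \eqref{restriction_axioms2}--\eqref{restriction_axioms4}.

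The one genuine point requiring care — the main obstacle, such as it is — is verifying that the abstract group-theoretic embedding $(\mathbb{Q},+)\hookrightarrow\pi_1(\mathcal{M})\setminus\{0\}$ can be chosen to be an \emph{order} embedding, i.e.\ that the copy of $\mathbb{Q}_{\geq 0}$ sits inside $\pi_1(\mathcal{M})$ respecting $<$, so that all the atomic $\mathcal{L}^{\star}$-formulas (in particular the order and the additive ``max'' axioms restricted to this copy) are preserved. This follows because in a totally ordered abelian group the positive cone determines the order, the element $\pi_1(c)>1$ generates a copy of $\mathbb{Z}$ that is automatically order-compatible, and divisibility lets one extend this to $\mathbb{Q}$ while the total order on $\pi_1(\mathcal{M})$ pins down the sign of every rational power of $\pi_1(c)$; once the order is matched, the ``$a+b=\max$'' behaviour on the copy is forced. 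Everything else is the bookkeeping already laid out in the remark, so I would keep the write-up short and refer back to it.
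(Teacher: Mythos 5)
Your proposal is correct and follows essentially the same route as the paper, which proves this corollary precisely by the remark preceding it (nontriviality of $\pi_1$ via axiom \eqref{nontriviality}, divisible closure giving $\mathbb{Q}_{\geq 0}$ inside $\pi_1(G)$, the decomposition $x=\pi_1(x)\cdot\pi_2(x)$, and the $L'$ axioms for the layered case). Your write-up is in fact slightly more careful than the paper's, in noting that $\pi_1(c)\notin\{0,1\}$ only gives $\pi_1(c)>1$ up to replacing $c$ by an inverse, and in verifying that the copy of $\mathbb{Q}_{\geq 0}$ is an order embedding.
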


\begin{rem}
$$\Theta \cong ( \{ 1 \} \times \mathbb{Q}_{+} , +, \cdot, \pi_{1}, \pi_{2}, <, (0,1),(1,1) )$$
where $\pi_{1}(x,y) = x$, $\pi_{2}(x,y) = y$,
$$(x_1,y_1) < (x_2,y_2) \Leftrightarrow (x_1 < x_2) \vee ((x_1 = x_2) \wedge (y_1 < y_2)),$$
$$(x_1,y_1) \cdot (x_2,y_2) = (x_1x_2,y_1y_2)$$
and
$$(x_1,y_1) + (x_2,y_2) =
  \begin{cases}
   (x_1,y_1) &  \ x_1 > x_2, \\
   (x_2,y_2) &  \ x_2 > x_1, \\
   (x_1, y_1 + y_2) & \ x_1 = x_2.
   \end{cases}$$

\end{rem}

\begin{defn}
Let $\mathrm{T}$ be an $\mathcal{L}$-theory. $\mathrm{T}_{\forall}$ is the set of all universal
consequences of $\mathrm{T}$.
\end{defn}

\begin{rem}
Let $\mathrm{T}$ be an $\mathcal{L}$-theory. For an $\mathcal{L}$-structure $\mathcal{A}$,
$\mathcal{A} \models \mathrm{T}_{\forall}$ iff there exists an \linebreak $\mathcal{L}$-structure $\mathcal{M}$ such that $\mathcal{M} \models \mathrm{T}$  and $\mathcal{A} \subseteq \mathcal{M}$ ($\mathcal{A}$ is a substructure of $\mathcal{M}$).
\end{rem}

\newpage
\section{The completeness of the theory of $L$-layered divisibly \\ \ \ \ \ closed $1$-semifields} \ \\

We now proceed to prove that the theory of $L$-layered divisibly closed $1$-semifields is a complete theory.

\begin{flushleft}The following lemma is proved in \cite{Layered}:\end{flushleft}

\begin{lem}
Let $G$ be a uniform $L$-layered domain. Then there exists a divisibly closed, \linebreak uniform $L$-layered
$1$-semifield $G'$ and an embedding $i : G \rightarrow G'$ such that if
$j: G \rightarrow H$ is an embedding of $G$ into a divisibly closed uniform $L$-layered semifield $H$,
then there exists a map $h : G' \rightarrow H$ such that $j = h \circ i$.
\end{lem}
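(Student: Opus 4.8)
The plan is to construct $G'$ explicitly as a divisible closure of the value part of $G$, layered by the same layering data as $G$, and then verify the universal property directly. First I would recall from Remark~\eqref{generic} that every uniform $L$-layered domain $G$ has the form $\mathcal{R}(L,\mathcal{G})$ for a cancellative ordered monoid $\mathcal{G}$, or more precisely that $G$ is built from its value monoid $\pi_1(G)^{\dag}$ (a cancellative ordered monoid) together with the layering semiring$^{\dag}$ $L$ via the identification $x = \pi_1(x)\cdot\pi_2(x)$. The key construction step is then: let $\widehat{\mathcal{G}}$ be the divisible closure of the value group of $\pi_1(G)^{\dag}$, i.e. the smallest divisibly closed ordered abelian group containing $(\pi_1(G)^{\dag},\cdot)$ — this exists and is unique as it is just $\pi_1(G)^{\dag}\otimes_{\mathbb{Z}}\mathbb{Q}$ with the induced order, adjoining a bottom element $0$ — and set $G' = \mathcal{R}(L,\widehat{\mathcal{G}})$ with addition given by the max rule from Remark~\eqref{generic}. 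By construction $G'$ is a divisibly closed uniform $L$-layered $1$-semifield, and the inclusion $\pi_1(G)^{\dag}\hookrightarrow\widehat{\mathcal{G}}$ induces an $\mathcal{L}^{\star}_L$-embedding $i: G\to G'$ that is the identity on layers.

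Next I would verify the universal property. Suppose $j: G\to H$ is an embedding into a divisibly closed uniform $L$-layered $1$-semifield $H$. Restricting to value parts, $j$ induces an embedding of ordered monoids $\pi_1(G)^{\dag}\to\pi_1(H)^{\dag}$; since $H$ is divisibly closed, $\pi_1(H)^{\dag}\setminus\{0\}$ is a divisibly closed ordered abelian group, so by the universal property of the divisible closure (a torsion-free divisible group is an injective $\mathbb{Z}$-module, and the order extends uniquely because roots in an ordered group are unique) there is a unique extension $\widehat{\mathcal{G}}\to\pi_1(H)^{\dag}$ of ordered monoids. Combining this with the layering data — which $j$ preserves because $j$ is an $\mathcal{L}^{\star}_L$-embedding, hence commutes with $\pi_2$ and fixes the constants $\ell'$ — and using $x = \pi_1(x)\cdot\pi_2(x)$, one assembles a map $h: G'\to H$. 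I would check that $h$ is an $\mathcal{L}^{\star}_L$-homomorphism (it respects $\cdot$ on values, $+$ on layers, the max-rule for $+$, and the order, all of which reduce to the already-verified facts on the value and layer components) and that $j = h\circ i$ holds by construction on value parts and trivially on layers.

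The main obstacle I expect is twofold. First, one must be careful that the divisible closure of an ordered abelian group is well-behaved: uniqueness of $n$-th roots in an ordered abelian group guarantees that the order on $\pi_1(G)^{\dag}\otimes_{\mathbb{Z}}\mathbb{Q}$ is well-defined and that the extension $\widehat{\mathcal{G}}\to\pi_1(H)^{\dag}$ is order-preserving, not merely group-theoretic; this is where the hypothesis that everything is cancellative and linearly ordered is essential. Second, the map $h$ need not be injective — the statement only asks for a map $h$ with $j = h\circ i$, not an embedding — so I would not attempt to prove injectivity; indeed $H$ may have a larger layering structure, and the point is simply factorization. The bookkeeping of checking that $h$ commutes with all symbols of $\mathcal{L}^{\star}_L$ is routine given the product decomposition $G' \cong \pi_1(G') \times \pi_2(G')$ as sets (which holds by the axioms, cf. the consequence $x = \pi_1(x)\cdot\pi_2(x)$), reducing every verification to the value component (handled by the divisible-closure universal property) or the layer component (handled by $j$ preserving $\pi_2$ and $L'$).
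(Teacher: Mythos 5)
Your construction is essentially the paper's: the paper's entire proof is the one-line instruction to take $G'$ to be the divisible closure of the $1$-layered semifield of fractions of $G$ as built in \cite{Layered}, and your $\mathcal{R}(L,\widehat{\mathcal{G}})$ with $\widehat{\mathcal{G}}$ the divisible hull of the (group completion of the) value monoid is exactly that object, with the universal property verified the same way. The only point worth making explicit is that $\pi_1(G)^{\dag}$ is a priori only a cancellative monoid, so one must first pass to its group of fractions (this is the ``semifield of fractions'' step the paper names) before tensoring with $\mathbb{Q}$; cancellativity guarantees this step is injective, and the rest of your argument goes through.
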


\begin{proof}
Take the $G'$ to be the  divisible closure of the 1-layered semifield of fraction of $G$ described
in \cite{Layered}.
\end{proof}

\bigbreak

\begin{prop}\label{universal1}
$DLSF_{\forall}$ is the theory of uniform layered semidomains, $DL$ .
\end{prop}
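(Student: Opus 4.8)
The plan is to prove the two inclusions $DLSF_\forall \subseteq Th(DL)$ and $Th(DL) \subseteq DLSF_\forall$ separately, using the model-theoretic characterization of $T_\forall$ recalled just above: for any $\mathcal{L}^\star$-structure $\mathcal{A}$, we have $\mathcal{A} \models DLSF_\forall$ if and only if $\mathcal{A}$ embeds as a substructure into some model of $DLSF$. Since $DL$ is itself axiomatized by a set of universal sentences together with... wait — actually, I should first observe that inspecting the axioms, $DL$ is \emph{not} purely universal (it still has no existential axioms in its given presentation, since $DL$ is obtained from $DLSF$ by deleting exactly the three non-universal axioms: nontriviality of $\pi_1$, divisible closure, and invertibility of $\pi_1(x)$). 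So $DL$ is a universal theory, hence $DL \vdash \phi$ for every $\phi \in DL$, and in particular every axiom of $DL$ is a universal consequence of $DLSF$ (since $DLSF \vdash DL$); this gives $DL \subseteq DLSF_\forall$, and therefore every universal consequence of $DL$ is a universal consequence of $DLSF$, i.e. $Th_\forall(DL) \subseteq DLSF_\forall$. Since $DL$ is universal, $Th_\forall(DL)$ is (equivalent to) $DL$ itself, giving one inclusion.

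For the reverse inclusion $DLSF_\forall \subseteq DL$ (equivalently: every universal consequence of $DLSF$ already follows from $DL$), the key step is: \emph{every model $\mathcal{A}$ of $DL$ embeds into a model of $DLSF$.} Granting this, if $\phi$ is any universal sentence with $DLSF \vdash \phi$ and $\mathcal{A} \models DL$, pick an embedding $\mathcal{A} \hookrightarrow \mathcal{M}$ with $\mathcal{M} \models DLSF$; then $\mathcal{M} \models \phi$, and since $\phi$ is universal it descends to the substructure $\mathcal{A}$, so $\mathcal{A} \models \phi$. As $\phi$ holds in every model of $DL$, we get $DL \vdash \phi$, which is exactly $DLSF_\forall \subseteq DL$. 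Combined with the previous paragraph this yields $DLSF_\forall = DL = Th(DL)$.

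So the whole proof reduces to the embedding claim, and that is where the real work lies. Given $\mathcal{A} \models DL$, I would build a model of $DLSF$ containing it in three moves, each repairing one of the three deleted axioms. First, using Axiom~\eqref{sentence} and the $\pi_1,\pi_2$ idempotency/annihilation relations, $\mathcal{A}$ decomposes as a "product" of the value part $\pi_1(\mathcal{A})$ (an ordered monoid-with-zero under $\cdot$, with $\max$-addition) and the layer part $\pi_2(\mathcal{A})$ (a cancellative ordered semiring$^\dag$); the reconstruction $x = \pi_1(x)\cdot\pi_2(x)$ from Remark~2 lets me work on the two fibers independently. On the value side: pass to the group of fractions of the cancellative monoid $\pi_1(\mathcal{A})^\dag$ and then to its divisible closure $\mathbb{Q}\otimes(-)$ with the induced order — this simultaneously forces $\pi_1(x)$ to be invertible and forces $\mathbb{N}$-divisible closure, and it is an ordered embedding because the monoid is cancellative and ordered. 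On the layer side, $\pi_2(\mathcal{A})$ is already a cancellative ordered semiring$^\dag$, so no change is needed there, except that I must ensure the nontriviality axiom~\eqref{nontriviality}: if $\pi_1(\mathcal{A})^\dag$ is the trivial group I must adjoin a nontrivial value, e.g. replace the value part by its product with $\mathbb{Q}_{\ge 0}$ (using $\Theta$ from \eqref{Theta}), which still admits an embedding. Reassembling the divisibly-closed value part with the layer part via the generic construction $\mathcal{R}(L,\mathcal{G})$ of Remark~\ref{generic} produces a model $\mathcal{M} \models DLSF$ with $\mathcal{A} \hookrightarrow \mathcal{M}$.

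The main obstacle I anticipate is verifying that the "group of fractions then divisible closure" operation on the value monoid is genuinely an \emph{ordered} embedding and interacts correctly with the $\max$-addition — i.e. that the induced order on $\mathbb{Q}\otimes(\pi_1(\mathcal{A})^\dag)^{\mathrm{gp}}$ is linear, extends the original order, and that $\max$ with respect to the extended order restricts to the old $\max$ — together with the bookkeeping needed to check that \emph{all} the $\pi_i$-compatibility axioms of $DL$ (the two $\pi_i$-additivity clauses, the three composition identities, the equality axiom~\eqref{equalityAxiom}) survive the reassembly. For the $L$-layered version one would repeat the argument carrying the predicate $P_L$ and the constants $\ell'$ along unchanged, since the value-side surgery does not touch $\pi_2(\mathcal{A}) \supseteq L'$; but the proposition as stated is only about $DLSF$ versus $DL$, so that is a remark rather than part of the proof.
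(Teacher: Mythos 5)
Your proof follows essentially the same route as the paper: one inclusion from the universality of $DL$ together with the fact that $DLSF$ entails every axiom of $DL$, and the reverse inclusion from the claim that every model of $DL$ embeds into a model of $DLSF$, combined with the characterization of $T_{\forall}$ via substructures of models. The only real difference is that the paper obtains the embedding claim by citing the lemma imported from \cite{Layered} (embedding into the divisible closure of the $1$-semifield of fractions), whereas your third paragraph reconstructs that lemma by hand --- in the process correctly flagging the nontriviality axiom as a point the cited construction must also address --- so it is a re-derivation of the paper's black box rather than a different argument.
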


\begin{proof}
As all sentences in the theory of layered domains, $DL$, are universal, $DL$ is contained in $DLSF(L)_{\forall}$
(since every divisibly closed layered semifield is particularly a layered semidomain).
On the other hand, by the above lemma and the model theoretic remark, we have that $DLSF(L)_{\forall}$ is contained
in the theory of $L$-layered semidomains, namely, since every structure that models the theory of layered semidomains  can be embedded in a model of $DLSF$ with the same layering semiring$^{\dag}$ it admits all universal consequences of $DLSF$, i.e., it also models $DLSF_{\forall}$.
\end{proof}

\begin{prop}
$DLSF(L)_{\forall}$ is just the theory of uniform $L$-layered semidomains, $DL(L)$.
\end{prop}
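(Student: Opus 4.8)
The plan is to mimic, essentially verbatim, the proof of Proposition~\ref{universal1}, which establishes the analogous statement for $DLSF$ and $DL$; the only extra ingredient is that we must keep track of the layering semiring$^{\dag}$ $L$ and the constant symbols $L^{\prime}$ together with the predicate $P_L$. First I would observe that every axiom of $DL(L)$ is universal: it is $LD$ together with the sentences \eqref{restriction_axioms1}--\eqref{restriction_axioms4}, and by the remarks following their definition all of these are quantifier-free $\mathcal{L}^{\star}_L$-sentences (in fact the formulas in \eqref{restriction_axioms1} are quantifier- and variable-free, being statements about the constant symbols $\ell^{\prime}$). Since a divisibly closed $L$-layered $1$-semifield is in particular an $L$-layered semidomain, every model of $DLSF(L)$ is a model of $DL(L)$, hence $DL(L) \subseteq DLSF(L)_{\forall}$.

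For the reverse inclusion, I would invoke the lemma quoted from \cite{Layered} just above Proposition~\ref{universal1}: given any uniform $L$-layered domain $G$ (i.e., any model of $DL(L)$), there is a divisibly closed uniform $L$-layered $1$-semifield $G'$ and an embedding $i : G \hookrightarrow G'$. The key point is that this $G'$ is built as the divisible closure of the $1$-layered semifield of fractions of $G$, so it has the same layering semiring$^{\dag}$ $L$; in particular $G'$ interprets the constant symbols $\ell^{\prime}$ and the predicate $P_L$ exactly as $G$ does, and $i$ is an $\mathcal{L}^{\star}_L$-embedding, so that $G$ is (isomorphic to) a substructure of $G'$ in the sense of the language $\mathcal{L}^{\star}_L$. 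Then by the standard model-theoretic fact recalled in the remark preceding this proposition (``$\mathcal{A} \models T_{\forall}$ iff $\mathcal{A}$ is a substructure of some $\mathcal{M} \models T$''), $G' \models DLSF(L)$ forces $G \models DLSF(L)_{\forall}$. Since this holds for every model $G$ of $DL(L)$, we get $DLSF(L)_{\forall} \subseteq DL(L)$, and combining the two inclusions yields equality.

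The only delicate point, and the step I would spell out with a little care, is the claim that $i$ is genuinely an $\mathcal{L}^{\star}_L$-embedding and not merely an $\mathcal{L}^{\star}$-embedding: one must check that forming the semifield of fractions and then the divisible closure does not alter the interpretation of the layer constants $\ell^{\prime}$ or of $P_L$ — i.e., that $\mathrm{Im}(\pi_2)$ is unchanged by these constructions, so that $L^{\prime}$ still sits inside $\mathrm{Im}(\pi_2)$ exactly as before. This is immediate from the construction in \cite{Layered} of the uniform layered semifield of fractions and of the divisible closure, since in both cases one only adjoins inverses and roots of \emph{value} elements (elements fixed by $\pi_1$), leaving the layer component untouched; I would state this and point to \cite{Layered}. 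Apart from that, the argument is a direct transcription of the proof of Proposition~\ref{universal1} with $DLSF$ replaced by $DLSF(L)$, $DL$ by $DL(L)$, and ``layered'' by ``$L$-layered'' throughout, so no new obstacle arises.
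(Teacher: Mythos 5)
Your proposal is correct and follows essentially the same route as the paper: the first inclusion via the observation that the sentences \eqref{restriction_axioms1}--\eqref{restriction_axioms4} are universal (indeed quantifier-free), and the reverse inclusion by transcribing the proof of Proposition~\ref{universal1}, i.e., the embedding lemma from \cite{Layered} combined with the characterization of $\mathrm{T}_{\forall}$-models as substructures of models of $\mathrm{T}$. Your additional care in checking that the embedding into the divisible closure of the semifield of fractions is genuinely an $\mathcal{L}^{\star}_{L}$-embedding (preserving $L^{\prime}$ and $P_L$) is a point the paper leaves implicit, but it is a refinement of the same argument rather than a different one.
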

\begin{proof}
The derivation of $\mathcal{L}^{\star}_{L}$-theory $DLSF(L)$ from the $\mathcal{L}^{\star}$-theory $DLSF$ was done introducing only universal sentences, leaving all sentences of the $\mathcal{L}^{\star}_{L}$- theory of $L$-layered domains universal, and thus $DL(L)$ it is contained in $DLSF(L)_{\forall}$. The rest of the proof is as in the proof of Proposition \ref{universal1}.
\end{proof}

\begin{defn}
Let $\mathrm{T}$ be an $\mathcal{L}$-theory. We say that $\mathrm{T}$ has \emph{algebraically-prime-models} if for any
$\mathcal{A} \models \mathrm{T}_{\forall}$ there is $\mathcal{M} \models \mathrm{T}$ and an embedding
$i: \mathcal{A} \rightarrow \mathcal{M}$ such that for each $\mathcal{N} \models \mathrm{T}$ and embedding
$i: \mathcal{A} \rightarrow \mathcal{N}$ there is a map $h: \mathcal{M} \rightarrow \mathcal{N}$ such that
$j = h \circ i$.
\end{defn}

\begin{rem}
Since $DLSF_{\forall}$ is the theory of layered semidomains, using the above lemma again
yields  that $DLSF$ has algebraically-prime-models. In the same manner, $DLSF(L)$ has algebraically-prime-models.
\end{rem}

\begin{flushleft}The following result is proved in \cite{Mark}: \end{flushleft}
\begin{prop}
Let $\mathrm{T}$ be an $\mathcal{L}$-theory satisfying the following two conditions:
\begin{enumerate}
  \item $\mathcal{T}$ has algebraically-prime-models.
  \item For $\mathcal{M},\mathcal{N} \models \mathrm{T}$ such that $M \subseteq N$, and
  for any quantifier free formula $\psi(\bar{v},w)$, and any $\bar{a} \in M$,
  if $N \models \exists w \psi(\bar{a},w)$ then $M \models \exists w \psi(\bar{a},w)$.
 Then there exists $c \in G$ such that $G \models \psi(\bar{a},c)$.
\end{enumerate}
Then $\mathrm{T}$ has quantifier elimination.
\end{prop}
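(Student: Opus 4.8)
The plan is to apply the standard quantifier-elimination test and feed the two hypotheses into it. \emph{First}, I reduce the assertion ``$\mathrm{T}$ has quantifier elimination'' to the special case of primitive formulas: it suffices to show that every formula $\exists w\,\theta(\bar v,w)$, where $\theta$ is a conjunction of literals (atomic or negated atomic $\mathcal{L}^{\star}$-formulas), is $\mathrm{T}$-equivalent to a quantifier-free formula. This is the routine induction on the number of quantifiers --- Boolean combinations of quantifier-free formulas are quantifier-free, $\forall w\,\phi\equiv\neg\exists w\,\neg\phi$, and having put the matrix of $\exists w\,\phi$ in quantifier-free and then disjunctive normal form $\bigvee_i\theta_i$ one has $\exists w\bigvee_i\theta_i\equiv\bigvee_i\exists w\,\theta_i$, so a single existential in front of a conjunction of literals is all that must be removed.

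\emph{Second}, I invoke the semantic criterion from \cite{Mark}: a formula $\phi(\bar v)$ with $\bar v$ nonempty (harmless here, as $\mathcal{L}^{\star}$ contains the constants $0,1$) is $\mathrm{T}$-equivalent to a quantifier-free formula whenever, for all $\mathcal{M},\mathcal{N}\models\mathrm{T}$ with a common substructure $\mathcal{A}$ and all $\bar a\in A$, we have $\mathcal{M}\models\phi(\bar a)\Leftrightarrow\mathcal{N}\models\phi(\bar a)$. So, taking $\phi=\exists w\,\theta(\bar v,w)$, it is enough to prove: if $\mathcal{M},\mathcal{N}\models\mathrm{T}$ share a common substructure $\mathcal{A}$, $\theta$ is a conjunction of literals, $\bar a\in A$, and $\mathcal{M}\models\exists w\,\theta(\bar a,w)$, then $\mathcal{N}\models\exists w\,\theta(\bar a,w)$ (the reverse then follows by symmetry). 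Replacing $\mathcal{A}$ by the substructure generated by $\bar a$, we may assume $\mathcal{A}$ is finitely generated; since $\mathcal{A}\subseteq\mathcal{M}\models\mathrm{T}$ we have $\mathcal{A}\models\mathrm{T}_{\forall}$.

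\emph{Third}, I bring in the hypotheses. By hypothesis~(1) there is a model $\mathcal{A}^{\ast}\models\mathrm{T}$ together with an embedding $i:\mathcal{A}\hookrightarrow\mathcal{A}^{\ast}$ witnessing that $\mathcal{A}^{\ast}$ is algebraically prime over $\mathcal{A}$; its universal property, applied to the two inclusions $\mathcal{A}\hookrightarrow\mathcal{M}$ and $\mathcal{A}\hookrightarrow\mathcal{N}$, yields maps $h_M:\mathcal{A}^{\ast}\to\mathcal{M}$ and $h_N:\mathcal{A}^{\ast}\to\mathcal{N}$ with $h_M\circ i$ and $h_N\circ i$ the inclusions. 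The crucial point is that $h_M$ and $h_N$ may be taken to be \emph{embeddings}; granting this and identifying $\mathcal{A}^{\ast}$ with $h_M(\mathcal{A}^{\ast})$, we have $\mathcal{A}^{\ast}\subseteq\mathcal{M}$ with both structures modelling $\mathrm{T}$ and $\bar a\in A\subseteq A^{\ast}$. Since $\mathcal{M}\models\exists w\,\theta(\bar a,w)$, hypothesis~(2) (applied to $\mathcal{A}^{\ast}\subseteq\mathcal{M}$) gives $\mathcal{A}^{\ast}\models\exists w\,\theta(\bar a,w)$; as $\theta$ is quantifier-free, an existential formula is preserved upward along the embedding $h_N:\mathcal{A}^{\ast}\hookrightarrow\mathcal{N}$, which fixes $\bar a$, so $\mathcal{N}\models\exists w\,\theta(\bar a,w)$, completing the test.

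The step I expect to be the main obstacle is the claim, used in the third stage, that the maps provided by algebraic primeness can be taken to be embeddings rather than mere homomorphisms: a homomorphism carries conjunctions of atomic formulas forward but need not \emph{reflect} atomic formulas, yet reflection is precisely what is needed both to regard $\mathcal{A}^{\ast}$ as a substructure of $\mathcal{M}$ (so that~(2) applies) and to transfer the quantifier-free matrix $\theta$ along $h_N$. This is dealt with inside the proof in \cite{Mark}; I note that in our situation it is automatic, since the linear-order axioms imposed on $\mathrm{Im}(\pi_1)$ and on $\mathrm{Im}(\pi_2)$ make every negated atomic formula equivalent to a positive one (e.g.\ $s\neq t\leftrightarrow(s<t)\vee(t<s)$), so any homomorphism preserving $<$ and $=$ necessarily reflects them and is therefore an embedding. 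Everything else --- the disjunctive-normal-form manipulations of the first stage and the upward preservation of existential formulas in the third --- is routine.
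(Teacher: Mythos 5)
The paper does not actually prove this proposition; it is quoted verbatim from Marker's book with the remark ``the following result is proved in \cite{Mark}''. Your reconstruction is the standard Marker argument --- reduce to primitive existentials, apply the semantic substructure test, interpose the algebraically prime model $\mathcal{A}^{\ast}$ between the common substructure and the two models, pull the witness down into $\mathcal{A}^{\ast}$ via hypothesis~(2) and push it up into $\mathcal{N}$ along an embedding --- and that argument is sound. You also correctly identify the one genuinely delicate point: the universal maps $h_M,h_N$ must be embeddings, not mere homomorphisms, both so that $\mathcal{A}^{\ast}$ can be treated as a substructure of $\mathcal{M}$ and so that the quantifier-free matrix transfers to $\mathcal{N}$. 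The clean resolution is that this is built into the definition of algebraically prime models as Marker states it (the paper's definition, which says only ``a map $h$'', is too weak as written for the proof to go through, and should be read as requiring an embedding).

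Your proposed ``automatic'' fix for the specific theories at hand, however, is wrong and you should drop it. You claim every negated atomic formula is equivalent to a positive one because $s\neq t\leftrightarrow(s<t)\vee(t<s)$. In $DLSF$ the relation $<$ is governed by the axiom $\forall x\forall y\ (x<y)\leftrightarrow(\pi_1(x)<\pi_1(y))$, so $<$ is only a linear \emph{quasi}-order on the universe: two distinct elements with $\pi_1(x)=\pi_1(y)$ but $\pi_2(x)\neq\pi_2(y)$ satisfy $\neg(x<y)\wedge\neg(y<x)\wedge x\neq y$. Hence a homomorphism preserving $<$ and $=$ need not reflect equality (it could collapse two elements of the same $\pi_1$-value), and your trichotomy argument does not show it is an embedding. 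Since the proposition is a general statement about an arbitrary $\mathcal{L}$-theory in any case, the side remark is both incorrect and unnecessary; with it removed and ``map'' read as ``embedding'' in the definition of algebraically prime models, your proof is the intended one.
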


\bigbreak
\bigbreak

We have already shown that $DLSF$ and $DLSF(L)$ satisfy the first condition. As for the second condition, it is only attained by $DLSF(L)$, as we will show next.

In the following proof, we make use of a technique introduced in \cite{Mark} for proving
that the theory of ordered divisible groups has quantifier elimination.

\begin{prop}\label{main}
Let $G$ and $H$ be layered divisibly closed semifields,
$G \subset H$, and $\psi(\bar{v},w)$ is a quantifier free formula.
Let $\bar{a} \in G$ and $b \in H$ such that $H \models \psi(\bar{a},b)$.
 Then there exists $c \in G$ such that $G \models \psi(\bar{a},c)$.
\end{prop}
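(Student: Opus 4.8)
The plan is to reduce an arbitrary quantifier-free formula $\psi(\bar v,w)$ to the normal form established in Section~3 and then handle each type of atomic constraint on the witness $w$ separately. By the disjunctive-normal-form discussion preceding Remark~\ref{rem1}, it suffices to treat the case where $\psi(\bar a,w)$ is a conjunction of atomic and negated atomic formulas; and by the analysis of atomic formulas, each conjunct, after applying the axioms $x=\pi_1(x)\cdot\pi_2(x)$ and the multiplicativity of $\pi_1,\pi_2$, is equivalent to a constraint of one of four shapes: a $\pi_1$-equation $Q_1(\bar a,w)\colon p(\pi_1(\bar a),\pi_1(w))=q(\pi_1(\bar a),\pi_1(w))$, a $\pi_1$-inequality $P_1(\bar a,w)$ of the same form with $<$, a $\pi_2$-equation $Q_2(\bar a,w)$ in the $\pi_2$-coordinates, or a $\pi_2$-disequation $P_2(\bar a,w)$. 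Writing $w$ in terms of $\pi_1(w)$ and $\pi_2(w)$, the system splits into one system $\Sigma_1$ purely in the unknown $s:=\pi_1(w)$ (living in the divisibly closed ordered abelian group $\pi_1(G)^{\dag}$, resp.\ $\pi_1(H)^{\dag}$) and one system $\Sigma_2$ purely in the unknown $\lambda:=\pi_2(w)$ (living in the layering semiring$^{\dag}$, which is literally $L$ for both $G$ and $H$ once we pass to $DLSF(L)$). A witness $c$ for $w$ in $G$ then amounts to a witness $s\in\pi_1(G)$ for $\Sigma_1$ together with a witness $\lambda\in\pi_2(G)=L=\pi_2(H)$ for $\Sigma_2$, and $c=s\cdot\lambda$ works by axiom~\eqref{equalityAxiom}.

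The second, layer, component is essentially free: since $DLSF(L)$ forces $\pi_2(G)$ and $\pi_2(H)$ to be the same structure $L$ (via $L'$, axioms \eqref{restriction_axioms1}--\eqref{restriction_axioms4}), any $\lambda\in\pi_2(H)$ witnessing $\Sigma_2$ already lies in $\pi_2(G)$, so nothing needs to be done — this is exactly why the conclusion fails for $DLSF$ but holds for $DLSF(L)$, and I would state that explicitly. So the real work is $\Sigma_1$: a finite conjunction of equalities and strict inequalities between monomials in $s$ with coefficients/exponents coming from $\pi_1(\bar a)\in\pi_1(G)$. Taking logarithms (i.e.\ working additively in the divisible ordered abelian group $\pi_1(G)^{\dag}\subseteq\pi_1(H)^{\dag}$), each such constraint becomes a linear equation or strict linear inequality $ks \gtrless \ell$ or $ks=\ell$ with $k\in\mathbb Z\setminus\{0\}$, $k$ from the formal $\mathbb N$-exponents, and $\ell\in\pi_1(G)$ a $\mathbb Z$-combination of the $\pi_1(a_i)$. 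Here I would invoke the technique from \cite{Mark} used for divisible ordered abelian groups: the equalities pin $s$ down to at most one value $\ell/k\in\pi_1(G)$ (using divisibility and divisible closure of $\pi_1(G)$, axiom~\eqref{dclosedAxiom}); if there is such an equality, check it satisfies the inequalities using that $G\subseteq H$ and the order is the same; if there is no equality, the inequalities cut out an interval in $\pi_1(H)^{\dag}$ with endpoints in $\pi_1(G)^{\dag}\cup\{\pm\infty\}$, nonempty because $b$'s $\pi_1$-component sits in it, and then density of $\pi_1(G)^{\dag}$ in itself (it contains a copy of $\mathbb Q_{\geq0}$ by Corollary~\ref{cor1} and is a nontrivial divisible ordered group, hence densely ordered with no endpoints) produces a point of $\pi_1(G)$ strictly inside.

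One subtlety I would be careful about: the atomic $\pi_1$-constraints are between monomials, and after inverting they are single linear constraints on $s$, so the conjunction is a genuine finite system of linear equalities and strict inequalities — there is no disjunction left inside once we are in a single disjunct, which is what makes the interval argument clean. I expect the main obstacle to be bookkeeping rather than mathematics: carefully justifying that the passage from the polynomial/term presentation of $\psi$ to the split systems $\Sigma_1,\Sigma_2$ is faithful (this is where the MTO normal form and the rewriting $t=\pi_1(t)\pi_2(t)$ from Section~3 do the heavy lifting), and checking the degenerate cases — a conjunct that is a $\pi_2$-disequation interacting with the nontriviality of $L$, or an equality system in $s$ that is inconsistent (in which case $H\models\psi(\bar a,b)$ already fails, contradiction, so this case is vacuous). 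Once the reduction is in place, the group-theoretic core is exactly the classical argument for $\mathrm{DOAG}$, transported along the isomorphism $\Theta\cong\{1\}\times\mathbb Q_{+}$ and its extension inside $\pi_1(G)$.
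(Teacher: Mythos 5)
Your proposal is correct and follows essentially the same route as the paper's proof: reduce $\psi$ to a conjunction of atomic formulas of the forms $Q_1,P_1,Q_2,P_2$, handle the $\pi_1$-constraints by the divisible-ordered-group argument in logarithmic notation (divisible closure for the equalities, density for the interval cut out by the strict inequalities) and the $\pi_2$-constraints by the identification of $\pi_2(G)$ and $\pi_2(H)$ with $L$, then recombine via $c=\pi_1(c)\cdot\pi_2(c)$. If anything, your explicit split into $\Sigma_1$ and $\Sigma_2$ with the final recombination $c=s\cdot\lambda$ is stated more carefully than the paper's case analysis, whose first two cases conclude without explicitly reassembling the two components.
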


\begin{proof}
First note, as remarked above, that $\psi(\bar{v},w)$ can be put in disjunctive normal form $\bigvee_{i=1}^{n} \bigwedge_{j=1}^{m}\theta_{i,j}(\bar(v),w)$  where $\theta_{i,j}$ is of one of the forms $Q_1, P_1, Q_2$ and $P_2$.
\\Because $H \models \psi(\bar{a},b)$ we have that $H \models \bigwedge_{j=1}^{m}\theta_{i,j}(\bar(a),b)$ for some $i$.
Thus, following \linebreak Remark~\ref{rem1}, we may assume $\psi$ is a conjunction of atomic and negated atomic formulas of the above forms $Q_1, P_1, Q_2$ and $P_2$,
which, in turn,  are just atomic formulas of the forms $Q_1, P_1, Q_2$ and $P_2$.
As the monomials of the form $Q_1$ and $P_1$ do not contain the $+$ binary function, we let ourselves pass to logarithmic notation where $+$ replaces $\cdot$ and $0$ replaces $1$. So, an atomic formula $\theta(\bar{v},w)$ is equivalent to one of the following forms:
$$\sum n_i \pi_{1}(v_i) + m \pi_{1}(w) = 0$$
$$\sum n_i \pi_{1}(v_i) + m \pi_{1}(w) > 0$$
$$r( \pi_{2}(v_1),...,\pi_2(v_{n-1}),\pi_{2}(w)) = s( \pi_{2}(v_1),...,\pi_2(v_{n-1}),\pi_{2}(w))$$
$$r( \pi_{2}(v_1),...,\pi_2(v_{n-1}),\pi_{2}(w)) \neq s( \pi_{2}(v_1),...,\pi_2(v_{n-1}),\pi_{2}(w))$$
\bigbreak

\begin{flushleft}where $r(x_1,....,x_{n-1},y), s(x_1,....,x_{n-1},y) \in \mathbb{N}[x_1,...,x_{n-1},y]$.\end{flushleft}
Note first, that for a polynomial \ $p(x_1,....,x_{n-1},y) \in (\mathbb{N} \cdot L^{\prime})[x_1,...,x_{n-1},y]$, one can consider $p(\pi_2(a_1),....,\pi_2(a_{n-1}),y)$ as a polynomial in $\pi_2(G)[y]$ (remember that $L^{\prime} \subset Im(\pi_2)$). \linebreak
Second, note that there is an element $g \in G$ such that $g = -\sum n_i \pi_{1}(a_i).$
(Here $-g$ is just $1 \div g$).\\
Now, as $\pi_{1}(g) = \pi_{1}(-\sum n_i \pi_{1}(a_i)) = -\sum n_i \pi_{1}(\pi_{1}(a_i)) = -\sum n_i \pi_{1}(a_i) = g$ we can replace the above forms by the following:
$$ m w_1 = g$$
$$ m w_1 > g$$
$$r(w_2) = s(w_2)$$
$$r(w_2) \neq s(w_2)$$

\begin{flushleft}where $w_1 = \pi_{1}(w), w_2 = \pi_2(w)$, $g = \pi_{1}(g)$ and $r,s \in \pi_2(G)[y]$.\end{flushleft}
Thus we may assume that
$$\psi(\bar{a},w) \leftrightarrow \bigwedge( m_iw_1 = g_{i}) \wedge \bigwedge( r_j(w_2) = s_j(w_2) )
\wedge \bigwedge( n_iw_1 < h_{i}) \wedge \bigwedge( p_{j}(w_2) \neq q_j(w_2))$$  where $g_{i}, h_{i} \in G$
such that $g_{i} = \pi_{1}(g_{i})$,  $h_{i} = \pi_{1}(h_{i})$, $r_j,s_j,p_j,q_j \in \pi_2(G)[y]$ and $m_i, n_i \in \mathbb{Z}$.\\

If there is actually a conjunct $m_i w_1 = g_{i}$, then we must have
$b = \frac{g_{j,i}}{m_i}$, and since $\pi_{1}(G)$ is divisibly closed,
we have that $b \in \pi_{1}(G)$. Thus $b \in G$ and we take $c=b$ completing the proof.
If there exists an index $j$ such that one of the polynomials $r_j$ and $s_j$ is nonzero, then $r_j(b) = s_j(b)$ with $b = \pi_2(b)$ (i.e., $b \in \pi_2(G)$).
Now, by axioms in  \eqref{restriction_axioms1}, \eqref{restriction_axioms2}, \eqref{restriction_axioms3}  we have an element $c = \pi_2(c) \in G$ (i.e. $c \in \pi_2(G)$) corresponding to $b$, such that $r_j(c) = s_j(c)$, again, completing the proof. If both cases are not attained, then
\begin{equation}\label{eq1}
\psi(\bar{a},w) \leftrightarrow \bigwedge( n_iw_1 > h_{i}) \wedge \bigwedge(p_{j}(w_2) \neq q_j(w_2)).
\end{equation}
Let $k_{0} = \min \{ \frac{h_{i}}{n_i} \ : \ n_i < 0 \}$ , $k_{1} = \max \{ \frac{h_{i}}{n_i} \ : \ n_i > 0 \}$.
Then $b \in H$ satisfies $\psi(\bar{a},w)$ if and only if $k_{0} < \pi_{1}(b) < k_{1}$. Because $b$ satisfies $\psi$, we must have  $k_{0} < k_{1}$. As $\pi_{1}(G)$ is divisibly closed, it is densely ordered since for
$s,t \in \pi_{1}(G)$ such that $s < t$, we have that $\frac{s+t}{2} \in \pi_{1}(G)$ and $s < \frac{s+t}{2} < t$
(remember we use logarithmic notation). So there is $d \in \pi_{1}(G)$ such that $k_{0} < d < k_{1}$.
Now, from the same reason given for the case of equality, the existence of $b \in \pi_2(H)$ such that $p_{j}(w_2) \neq q_j(w_2)$ \ for all indices $j$, implies the existence of an element $e \in \pi_2(G)$ such that $p_{j}(e) \neq q_j(e)$ \ for all $j$.
Taking $c= d \cdot e \in G$, we have that $\pi_1(c) = \pi_1(d) \cdot \pi_1(e) = d \cdot 1 = d$ and $\pi_2(c) = \pi_2(d) \cdot \pi_2(e) = 1 \cdot e = e$. So $c \in G$ satisfies all inequalities of \eqref{eq1}, thus completing our proof.
\end{proof}


\begin{flushleft}This last proposition proves that\end{flushleft}
\begin{prop}\label{qel}
$DLSF(L)$ has quantifier elimination.
\end{prop}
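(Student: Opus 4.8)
The plan is to invoke the criterion for quantifier elimination quoted just above (the proposition from \cite{Mark}): a theory $\mathrm{T}$ has quantifier elimination provided (1) it has algebraically-prime-models, and (2) whenever $\mathcal{M},\mathcal{N}\models\mathrm{T}$ with $M\subseteq N$, $\psi(\bar v,w)$ is quantifier free, $\bar a\in M$, and $\mathcal{N}\models\exists w\,\psi(\bar a,w)$, then already $\mathcal{M}\models\exists w\,\psi(\bar a,w)$. So the proof of Proposition~\ref{qel} is simply the verification that $DLSF(L)$ meets both hypotheses, each of which has in fact already been established in the excerpt.

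For condition (1), I would recall that $DLSF(L)_{\forall}$ is the theory of $L$-layered semidomains $DL(L)$ (the proposition proved two results back), and that by the universal-property lemma from \cite{Layered} — every $L$-layered domain embeds into a divisibly closed $L$-layered $1$-semifield through which any other such embedding factors — $DLSF(L)$ has algebraically-prime-models; this was noted in the remark following the definition of algebraically-prime-models. For condition (2), the key observation is that any model of $DLSF(L)$ is precisely (the $\mathcal{L}^{\star}_L$-structure of) an $L$-layered divisibly closed $1$-semifield, so given $\mathcal{M}\subseteq\mathcal{N}$ models of $DLSF(L)$, $\bar a\in M$, and a witness $b\in N$ with $\mathcal{N}\models\psi(\bar a,b)$, Proposition~\ref{main} applied with $G=\mathcal{M}$, $H=\mathcal{N}$ produces $c\in M$ with $\mathcal{M}\models\psi(\bar a,c)$, which is exactly what condition (2) demands. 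Putting these together, the cited criterion yields that $DLSF(L)$ has quantifier elimination.

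The only genuine content here is bookkeeping: one must check that the structures denoted "$G\subset H$, layered divisibly closed semifields" in Proposition~\ref{main} are the same thing as "$M\subseteq N$ with $M,N\models DLSF(L)$" in the criterion, and that "there exists $c\in G$ such that $G\models\psi(\bar a,c)$" is literally the statement "$M\models\exists w\,\psi(\bar a,w)$". This identification is immediate from the discussion in Section~2 that $L$-layered divisibly closed $1$-semifields are exactly the models of $DLSF(L)$, so no real obstacle arises; the heavy lifting was done in Proposition~\ref{main}. I would therefore present the proof as a short paragraph assembling these three ingredients, with perhaps one sentence reminding the reader why a substructure of a model of $DLSF(L)$ that is itself a model is again an $L$-layered divisibly closed $1$-semifield (so that Proposition~\ref{main} genuinely applies).

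\begin{proof}
By the criterion quoted above, it suffices to verify the two conditions for $\mathrm{T}=DLSF(L)$. Condition (1) has already been observed: since $DLSF(L)_{\forall}=DL(L)$ and, by the lemma of \cite{Layered}, every model of $DL(L)$ embeds in a divisibly closed $L$-layered $1$-semifield through which all such embeddings factor, $DLSF(L)$ has algebraically-prime-models. For condition (2), let $\mathcal{M},\mathcal{N}\models DLSF(L)$ with $M\subseteq N$; these are exactly $L$-layered divisibly closed $1$-semifields $G=\mathcal{M}\subset H=\mathcal{N}$. Let $\psi(\bar v,w)$ be quantifier free, $\bar a\in M$, and suppose $\mathcal{N}\models\exists w\,\psi(\bar a,w)$, say $H\models\psi(\bar a,b)$ for some $b\in H$. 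By Proposition~\ref{main} there is $c\in G$ with $G\models\psi(\bar a,c)$, i.e.\ $\mathcal{M}\models\exists w\,\psi(\bar a,w)$. Both conditions holding, $DLSF(L)$ has quantifier elimination.
\end{proof}
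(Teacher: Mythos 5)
Your proposal is correct and follows exactly the paper's route: the paper states Proposition~\ref{qel} as an immediate consequence of the quantifier-elimination criterion from \cite{Mark}, the earlier remark that $DLSF(L)$ has algebraically-prime-models, and Proposition~\ref{main}. You have merely written out explicitly the assembly that the paper leaves implicit, so there is nothing to correct.
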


%
%
%
%

\begin{defn}
An $\mathcal{L}$-theory is \emph{model-complete} if $\mathcal{M} \prec \mathcal{N}$
whenever $\mathcal{M} \subset \mathcal{N}$ and $\mathcal{M}, \mathcal{N}~\models~\mathrm{T}$.\\
\end{defn}

\medbreak

Other well-known results (\cite{Mark}) in model theory are given in the following:

\begin{prop}
If $\mathrm{T}$ has quantifier elimination, then $\mathrm{T}$ is model-complete.
\end{prop}

\begin{prop}\label{prop1}
Let $\mathrm{T}$ be a model-complete theory. Suppose that there is $\mathcal{M}_{0} \models \mathrm{T}$ such that $\mathcal{M}_{0}$ embeds into every model of $\mathrm{T}$. Then $\mathrm{T}$ is complete.
\end{prop}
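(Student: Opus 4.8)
The plan is to prove completeness through elementary equivalence of models. Recall that a consistent theory $\mathrm{T}$ is complete precisely when any two of its models satisfy exactly the same $\mathcal{L}$-sentences; equivalently, when all models of $\mathrm{T}$ are elementarily equivalent. So it suffices to show that any two models of $\mathrm{T}$ are elementarily equivalent. Observe first that $\mathrm{T}$ is consistent, since by hypothesis $\mathcal{M}_0 \models \mathrm{T}$, so the notion of completeness is not vacuous here.

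First I would fix two arbitrary models $\mathcal{M}, \mathcal{N} \models \mathrm{T}$ and use the common prime model $\mathcal{M}_0$ as a bridge between them. By assumption there are embeddings $\eta : \mathcal{M}_0 \to \mathcal{M}$ and $\zeta : \mathcal{M}_0 \to \mathcal{N}$. Since an embedding is injective and preserves the interpretation of every symbol of $\mathcal{L}$, each image $\eta(\mathcal{M}_0)$ and $\zeta(\mathcal{M}_0)$ is a substructure (closed under the functions, containing the constants) of $\mathcal{M}$ and $\mathcal{N}$ respectively, and is isomorphic to $\mathcal{M}_0$ via the corresponding embedding. Because isomorphic structures satisfy the same $\mathcal{L}$-sentences and $\mathcal{M}_0 \models \mathrm{T}$, it follows that $\eta(\mathcal{M}_0) \models \mathrm{T}$ and likewise $\zeta(\mathcal{M}_0) \models \mathrm{T}$.

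Next I would invoke model-completeness to upgrade these inclusions to elementary ones. As $\eta(\mathcal{M}_0) \subset \mathcal{M}$ with both structures modelling $\mathrm{T}$, model-completeness yields $\eta(\mathcal{M}_0) \prec \mathcal{M}$; in particular $\eta(\mathcal{M}_0)$ and $\mathcal{M}$ satisfy the same $\mathcal{L}$-sentences. Composing with the isomorphism $\mathcal{M}_0 \cong \eta(\mathcal{M}_0)$, which transfers satisfaction of sentences, I conclude that $\mathcal{M}_0$ and $\mathcal{M}$ are elementarily equivalent. The identical argument applied to $\zeta$ shows that $\mathcal{M}_0$ and $\mathcal{N}$ are elementarily equivalent. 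Since elementary equivalence is transitive, $\mathcal{M}$ and $\mathcal{N}$ satisfy the same $\mathcal{L}$-sentences; as $\mathcal{M}, \mathcal{N}$ were arbitrary models of $\mathrm{T}$, the theory $\mathrm{T}$ is complete.

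The one point that I would flag as the crux—though it is more a matter of bookkeeping than of difficulty—is the passage between the abstract embeddings supplied by the prime-model hypothesis and the substructure relation $\subset$ on which the definition of model-completeness is stated. Model-completeness speaks of $\mathcal{M}_0 \subset \mathcal{N}$, whereas the hypothesis only gives embeddings into $\mathcal{M}$ and $\mathcal{N}$; the remedy is precisely to replace $\mathcal{M}_0$ by its isomorphic image inside each model and to note that both the property of modelling $\mathrm{T}$ and the satisfaction of arbitrary sentences transfer across the isomorphism. Everything else is purely formal, relying only on transitivity of elementary equivalence to chain the two equivalences through $\mathcal{M}_0$ into $\mathcal{M} \equiv \mathcal{N}$.
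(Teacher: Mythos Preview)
Your argument is correct and is exactly the standard proof of this fact. Note, however, that the paper does not actually prove this proposition: it is listed among ``well-known results'' cited from Marker's textbook \cite{Mark}, with no proof given. So there is nothing in the paper to compare against beyond the citation; your write-up supplies precisely the textbook argument the paper defers to.
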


\begin{cor}
$DLSF(L)$ is a complete theory.
\end{cor}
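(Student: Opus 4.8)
The plan is to combine the three propositions established immediately above into a single short argument. First I would invoke Proposition~\ref{qel}, which asserts that $DLSF(L)$ has quantifier elimination. By the general result quoted just above (``If $\mathrm{T}$ has quantifier elimination, then $\mathrm{T}$ is model-complete''), this immediately yields that $DLSF(L)$ is model-complete. So the first step is purely a citation chain: quantifier elimination $\Rightarrow$ model-completeness.

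Next I would supply the missing hypothesis of Proposition~\ref{prop1}, namely the existence of a model of $DLSF(L)$ that embeds into every model of $DLSF(L)$. This is exactly Corollary~\ref{cor1} together with the remark preceding it: the structure $\Theta_L = L \times \mathbb{Q}_{\geq 0}$ is a model of $DLSF(L)$, and it embeds into every $\mathcal{L}^{\star}_L$-structure $\mathcal{M}$ with $\mathcal{M} \models DLSF(L)$. One should perhaps note in passing why $\Theta_L \models DLSF(L)$ — it carries the obvious $L$-layered divisibly closed $1$-semifield structure with value monoid $\mathbb{Q}_{\geq 0}$ (which is divisibly closed and densely ordered) and layering semiring a literal copy of $L$ — but this was already observed in the excerpt, so it can simply be cited.

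Finally, I would apply Proposition~\ref{prop1} directly: $DLSF(L)$ is model-complete (step one) and has a model $\mathcal{M}_0 = \Theta_L$ embedding into every model of $DLSF(L)$ (step two), hence $DLSF(L)$ is complete. That is the whole proof.

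I do not anticipate any real obstacle here: every ingredient has been assembled in the preceding pages, and the corollary is genuinely just a matter of stringing them together. The only thing to be careful about is making sure the hypotheses of the two cited propositions are stated in the form they were given — in particular that ``embeds into every model'' is precisely what Corollary~\ref{cor1} provides, and that model-completeness (not merely quantifier elimination) is what Proposition~\ref{prop1} requires as input. If one wanted to be thorough one could also remark that completeness of $DLSF(L)$ recovers, via the model-completeness already noted, the polynomial-equality transfer statement advertised in the abstract, but that is a corollary of the corollary and not needed for the proof itself.
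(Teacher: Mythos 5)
Your proposal is correct and follows essentially the same route as the paper: the paper's proof likewise applies Proposition~\ref{prop1} with $\mathcal{M}_0 = \Theta_L$, relying implicitly on the model-completeness already obtained from quantifier elimination (Proposition~\ref{qel}). You merely make the citation chain explicit, which is fine.
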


\begin{proof}
Since $\Theta_L$ (eq. \ref{Theta}) embeds into every model of $DLSF(L)$, Proposition \ref{prop1} yields
that $DLSF(L)$ is a complete theory.
\end{proof}

One important consequence of the theory introduced above is the following:

\begin{defn}
Let $(F,+,\cdot,0,1)$ be a linearly ordered field. The max-plus algebra $MP(F)$
is a semifield $(\bar{F}= F \cup -\infty,\oplus,\odot,-\infty,0)$  such that
for $a,b \in \bar{F}$
\begin{align*} a \oplus b = \max(a,b), \ \ \  a \odot b = a + b. \end{align*}
Note that $-\infty$ serves as the identity element with respect to the $\oplus$ operation
while $0$ serves as the identity element with respect to the $\odot$ operation.
\end{defn}

The above definition is a generalization of the traditional definition of the max-plus algebra
where $F$ is taken to be the real number field $\mathbb{R}$.

\begin{prop}
The theory of divisibly closed max-plus algebras is complete.
\end{prop}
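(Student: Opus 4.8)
The plan is to realize the theory of divisibly closed max-plus algebras as a special case of $DLSF(L)$ for a suitable choice of layering semiring $L$, and then invoke the completeness of $DLSF(L)$ already established. First I would identify the right $L$: in the max-plus algebra $MP(F)$ over a linearly ordered field $F$, there is effectively a single layer, so the natural candidate is the trivial layering semiring$^{\dag}$ $L = \{1\}$ (or, depending on the conventions of \cite{Layered}, the two-element layering semiring capturing the distinction between ``tangible'' and ``ghost''). With that choice, a divisibly closed uniform $L$-layered $1$-semifield $\mathcal{R}(L,\mathcal{G})$ collapses to the divisible closure of the value monoid $\mathcal{G}$, equipped with $\max$ as addition and monoid multiplication; this is precisely the structure of a divisibly closed max-plus algebra $MP(F)$, where $\mathcal{G} = (F,+)$ is the additive group of the field read multiplicatively in logarithmic notation.

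Next I would make the translation explicit: an $\mathcal{L}^{\star}_{L}$-structure interpretation of $MP(\bar F)$ sends $0 \mapsto -\infty$, $1 \mapsto 0 \in F$, $\cdot \mapsto \odot = +$, $+ \mapsto \oplus = \max$, $\pi_1 \mapsto \mathrm{id}$ on the ``value'' part, $\pi_2 \mapsto$ the constant map to $1$, and $<\mapsto$ the order of $F$ extended by $-\infty$ as least element. One then checks that $MP(\bar F)$, when $F$ is divisibly closed as an ordered group (equivalently $F$ is a real-closed-type field, or simply one whose additive group is divisible — which is automatic for a field of characteristic $0$), satisfies every axiom of $DLSF(L)$: commutative associative multiplication with unit, invertibility of nonzero elements, the additivity/max axioms for $\pi_1$, triviality of $\pi_2$, the linear order axioms, the $\mathbb{N}$-divisibility axiom \eqref{dclosedAxiom} (which is exactly divisibility of $(F,+)$), and the restriction axioms \eqref{restriction_axioms1}--\eqref{restriction_axioms4} for the trivial $L$. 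Conversely, every model of $DLSF(L)$ for this $L$ is, up to the identification $x = \pi_1(x)\cdot\pi_2(x)$ with $\pi_2(x)=1$, a divisibly closed max-plus-type structure. Hence $Th(MP(\bar F))$ as an $\mathcal{L}^{\star}_{L}$-theory coincides with $DLSF(L)$, which by the preceding corollary is complete; therefore the theory of divisibly closed max-plus algebras is complete.

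The main obstacle I anticipate is not a deep one but a bookkeeping one: pinning down the precise relationship between the first-order language of max-plus algebras (with signature $\{\oplus,\odot,-\infty,0,<\}$) and the richer language $\mathcal{L}^{\star}_{L}$ with its two projection symbols and the predicate $P_L$. Since in the trivial-$L$ case $\pi_1$ and $\pi_2$ are term-definable from the rest (indeed $\pi_2 \equiv 1$ and $\pi_1 \equiv \mathrm{id}$), the two languages are mutually interpretable and a sentence in one translates to a sentence in the other preserving truth in the relevant structures; I would spell out this reduct/definitional-expansion argument carefully, as it is where completeness actually transfers. A secondary subtlety is the exact divisibility hypothesis on $F$: ``divisibly closed max-plus algebra'' should be read as the $1$-divisible closure in the sense of \cite{Layered}, matching axiom \eqref{dclosedAxiom}, so I would state at the outset that this is the intended meaning and that for $F$ of characteristic $0$ no extra assumption is needed.
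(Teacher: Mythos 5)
Your proposal is correct and follows essentially the same route as the paper: both identify divisibly closed max-plus algebras with models of $DLSF(L)$ for the trivial idempotent layering semiring $L=\{1\}$ (equivalently, adjoining $\forall x\ \pi_2(x)=1$), and both then obtain completeness from the quantifier elimination of $DLSF(L)$ together with the embedding of the prime model $\Theta \cong \mathbb{Q}_{\geq 0}$ into every model. Your extra care about the definitional translation between the signature $\{\oplus,\odot,-\infty,0,<\}$ and $\mathcal{L}^{\star}_{L}$ is a point the paper passes over silently, but it does not change the argument.
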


\begin{proof}
A max-plus algebra can be defined simply by taking the layering semiring$^{\dag}$ \ $L$ to be the idempotent semiring$^{\dag}$  $\{1 \}$ (idempotent in the sense that $1+1=1$).
conversely, taking the layering semiring$^{\dag}$ \ $L$ to be idempotent semiring$^{\dag}$ $\{ 1 \}$ yields a max-plus algebra.
Taking $L$ to be $\{ 1 \}$ is equivalent to taking the unary function $\pi_2$ to admit $\forall x \ \pi_2(x)=1$.
Thus adding the sentence $\forall x \ \pi_2(x)=1$ to the theory of LD yields the theory of max-plus algebras, and
adding it to the theory of DLSF yields the theory of divisibly closed max-plus algebras.
This implies that the theory of divisibly closed max-plus algebras has quantifier elimination.
Moreover, we have
$$\Theta = 1 \times \mathbb{Q}_{ \geq 0} \cong \mathbb{Q}_{ \geq 0}$$ which embeds into
every divisibly closed max-plus algebra, which is thus itself a complete theory.


\end{proof}

\bibliographystyle{amsplain}
\bibliography{TAGBib}

\end{document}